\title{On the relationship between variable Wiener index and variable Szeged index}
\author{Stijn Cambie\thanks{Department of Mathematics, Radboud University Nijmegen, Postbus 9010, 6500 GL Nijmegen, The Netherlands. This work has been supported by a Vidi Grant of the Netherlands Organization for Scientific Research (NWO), grant number $639.032.614$. Current affiliation: Extremal Combinatorics and Probability Group (ECOPRO), Institute for Basic Science (IBS), Daejeon, South Korea. Email: \href{mailto:stijn.cambie@hotmail.com}{stijn.cambie@hotmail.com}.} \and John Haslegrave\thanks{Mathematics Institute, University of Warwick, Coventry, UK. Supported by the UK Research and Innovation Future Leaders Fellowship MR/S016325/1. Current affiliation: Mathematical Institute, University of Oxford, UK. Email: \href{mailto:j.haslegrave@cantab.net}{j.haslegrave@cantab.net}.}}
\newtheorem*{theorem-non}{Theorem} 
\newtheorem{theorem}{Theorem}[section]
\newtheorem{corollary}[theorem]{Corollary}
\newtheorem{claim}[theorem]{Claim}
\newtheorem{proposition}[theorem]{Proposition}
\newtheorem{conj}[theorem]{Conjecture}
\theoremstyle{definition}
\newtheorem{defi}[theorem]{Definition}
\DeclareMathOperator{\Sz}{Sz}
\DeclareMathOperator{\diam}{diam}
\newcommand{\Sza}{\Sz^\alpha}
\newcommand{\Wa}{W^\alpha}
\newcommand{\ie}{i.e.\ }
\newcommand{\eg}{e.g.\ }
\newcommand*{\myproofname}{Proof}
\newenvironment{claimproof}[1][\myproofname]{\begin{proof}[#1]}{\end{proof}}
\begin{document}
	\maketitle
	
	\begin{abstract}
	We resolve two conjectures of Hri\v{n}\'{a}kov\'{a}, Knor and \v{S}krekovski (2019) concerning the relationship 
	between the variable Wiener index and variable Szeged index for a connected, non-complete graph, one of which would imply the other. 
	The strong conjecture is that for any such graph there is a critical exponent in $(0,1]$, below which the variable Wiener index is larger and
	above which the variable Szeged index is larger. The weak conjecture is that the variable Szeged index is always larger for any exponent 
	exceeding $1$. They proved the weak conjecture for bipartite graphs, and the strong conjecture for trees.
	
	In this note we disprove the strong conjecture, although we show that it is true for almost all graphs, and for bipartite and block graphs.
	We also show that the weak conjecture holds for all graphs by proving a majorization relationship.
	
	\smallskip
	\noindent\textbf{Keywords}: variable Wiener index; variable Szeged index; topological indices.
	
	\noindent\textbf{MSC 2020}: 05C09, 05C12, 05C35.
	\end{abstract}

\section{Introduction}\label{sec:intro}
Topological indices of graphs originate from chemical graph theory, and are descriptors of organic molecules that depend only on the graph of atoms and bonds and not on their physical arrangement. The oldest and most important of these is the Wiener index, or total distance, which was introduced by Wiener in 1947 and shown to accurately predict alkane boiling points \cite{Wie47}. Here, as is usual, the index is applied to the unweighted graph of carbon-carbon bonds, neglecting hydrogen atoms. Wiener subsequently established correlations between the Wiener index and other physical quantities (see \eg \cite{Wie48}). The Wiener index was rediscovered by Rouvaray \cite{Rou75} in the 1970s, and applied to more general chemical graphs. It was independently introduced to the mathematics literature for trees by Zelinka \cite{Zel68}, and subsequently extended to general graphs by Entringer, Jackson and Snyder \cite{EJS76}. Since the late 1970s, it has seen an explosion of interest in both fields; see \eg the survey article \cite{DEG01} from the mathematical side, and the collection \cite{50years} from the chemical side. Its importance has led to the study of many other topological indices, such as the Randi\'c index \cite{Ran75}, first and second Zagreb indices \cite{GT72, GRTW75}, and the Hosoya index \cite{Hos71}.

Among topological indices, the Wiener index has a particularly simple and natural definition: it is the sum, over all unordered pairs of vertices, of the graph distance between those vertices (hence the alternative name of ``total distance''; it is linearly related to the average distance when the order is fixed). We define the Wiener index only when the graph is connected, so all distances are finite; this is certainly the case for molecular graphs.

As a special case of the alternative method to compute $W(G)$ by use of a canonical metric representation of a graph, as defined in~\cite{Kla06}, it is well known that for a tree $T$ one can compute the total distance as a sum of a certain quantity over the edges instead of the sum of distances over all pairs of vertices. For each edge $e=uv$, write $n_u(v)$ for the number of vertices strictly closer to $u$ than to $v$ (including $u$ itself), and $n_v(u)$ for the number strictly closer to $v$ than to $u$. Then
\begin{equation}W(T)=\sum_{uv\in E(T)}n_x(y)\cdot n_y(x).\label{wiener-tree}\end{equation}
This fact, which follows from a simple double-counting, was observed by Wiener in his original paper \cite{Wie47}, where only trees were considered.
However, \eqref{wiener-tree} fails to hold for most other graphs, owing to the fact that shortest paths are typically not unique. 

For a general graph $G$, the right-hand side of \eqref{wiener-tree} denotes a different topological index, introduced by Gutman \cite{Gut94} and subsequently called the Szeged index: 
\[\Sz(G):= \sum_{uv \in E(G)} n_u(v)\cdot n_v(u).\] 
In general it is not necessarily true that $\Sz(G)=W(G)$, but the inequality $\Sz(G) \ge W(G)$ holds for all connected graphs \cite{KRG96}. The graphs where equality occurs were classified by Dobrynin and Gutman \cite{DG95}; these are precisely the block graphs (also known as clique trees), that is, graphs where every maximal biconnected subgraph is complete. The difference between the Szeged and Wiener indices can take any non-negative integer value other than $1$ or $3$ \cite{NKA11}.

The study of topological indices encompasses variable forms, which depend not only on the graph $G$ but on a parameter often denoted $\alpha$; the terms which make up the index in question are raised to the power $\alpha$ before summing. This allows the sensitivity of the topological index to extreme values to be altered. For example, the variable form of the Randi\'c index was introduced in \cite{BE98}, and the variable Zagreb indices in \cite{MN04}.

Hri\v{n}\'{a}kov\'{a}, Knor and \v{S}krekovski~\cite{HKS19} generalized the Wiener and Szeged indices to analogous variable forms, and studied the relation between the variable Wiener index and variable Szeged index.
For a parameter $\alpha$, these two graph parameters are defined as respectively 
\[W^{\alpha}(G):= \sum_{u,v \in V} d(u,v)^{\alpha}\quad\text{and}\quad\Sz^{\alpha}(G):= \sum_{uv \in E(G)} (n_u(v)\cdot n_v(u))^{\alpha}.\]
They gave the following two conjectures (in these conjectures, and the rest of this paper, graphs are tacitly assumed to be connected).
\begin{conj}[{\cite[Conjecture 5]{HKS19}}]\label{strong-conj}For every non-complete graph $G$ there is a constant $\alpha_G\in(0,1]$ such that
\begin{align*}
\Sza(G)&>\Wa(G)\quad&\text{if }\alpha>\alpha_G;\\
\Sza(G)&=\Wa(G)\quad&\text{if }\alpha=\alpha_G;\\
\Sza(G)&<\Wa(G)\quad&\text{if }\alpha<\alpha_G.
\end{align*}
\end{conj}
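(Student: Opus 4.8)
The plan is to study the single function $f(\alpha):=\Sza(G)-\Wa(G)$, which is entire in $\alpha$, and to show that for a non-complete connected $G$ it has a unique real zero $\alpha_G\in(0,1]$ across which it changes sign from negative to positive. Write $w_k$ for the number of unordered vertex pairs at distance $k$ and $s_m$ for the number of edges $uv$ with $n_u(v)\cdot n_v(u)=m$; since distances are at most $\diam(G)$ and Szeged products at most $n^2/4$, only finitely many of these counts are nonzero and
\[ f(\alpha)=\sum_{j\ge 1} c_j\,j^{\alpha},\qquad c_j:=s_j-w_j. \]
The two endpoints I need are immediate. At $\alpha=0$ every term is $1$, so $f(0)=|E(G)|-\binom{n}{2}<0$ because $G$ is non-complete. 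At $\alpha=1$ we have $f(1)=\Sz(G)-W(G)\ge 0$ by the known inequality $\Sz\ge W$, with equality precisely for block graphs. Thus $f(0)<0\le f(1)$, so by the intermediate value theorem $f$ has at least one zero in $(0,1]$ (at $\alpha=1$ itself in the block-graph case).

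Everything then hinges on showing $f$ cannot vanish twice. Here I would invoke the classical bound on the real zeros of an exponential sum (a continuous analogue of Descartes' rule of signs): since $f(\alpha)=\sum_j c_j e^{(\ln j)\alpha}$ is a combination of exponentials with distinct, increasing frequencies $\ln j$, the number of its real zeros, counted with multiplicity, is at most the number of sign changes in the coefficient sequence $(c_j)$ read in increasing order of $j$. The whole conjecture therefore reduces to a purely combinatorial statement, which I will call the \emph{single sign-change property}: there is a threshold $t$ with $c_j\le 0$ for $j<t$ and $c_j\ge 0$ for $j>t$. Granting it, $f$ has at most one zero; combined with $f(0)<0\le f(1)$ this zero is unique, lies in $(0,1]$, and is sign-changing, so $\Sza<\Wa$ below it and $\Sza>\Wa$ above it, exactly as claimed. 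The low end is encouraging: $c_1=s_1-|E(G)|<0$, because in a non-complete graph not every edge can have Szeged product $1$ (a shortest path between two non-adjacent vertices contains an edge one of whose endpoints is strictly closer to the next path vertex).

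I expect the single sign-change property to be the entire difficulty, and it is where I would spend my effort. It is not a statement about the means of the two distributions---that is only the $\alpha=1$ inequality $\Sz\ge W$---but about their full profiles: one must rule out a block of many vertex pairs at some intermediate distance (a negative $c_j$) sitting above a positive coefficient produced by a few small Szeged products. An equivalent and possibly more workable reformulation is that the ratio $\Sza(G)/\Wa(G)$ is strictly increasing in $\alpha$, \ie that the distance-weighted mean of $\ln d(u,v)$ never exceeds the Szeged-weighted mean of $\ln\bigl(n_u(v)\,n_v(u)\bigr)$; this is a majorization-type comparison that I would first try to establish structurally for trees, and then for bipartite and block graphs, where the Szeged products and distances are tightly linked. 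I am, however, not convinced the property holds for every graph: nothing in the set-up forces the net coefficients $c_j$ to change sign only once, and a single extra sign change would create a second zero of $f$ and destroy the clean trichotomy. For this reason I would test the sign-change property on small non-bipartite, non-block graphs before committing to it, since that is precisely where I would expect a counterexample to the conjecture to hide.
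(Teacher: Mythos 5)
Your argument has a genuine gap, and it is precisely the step you yourself flagged: the \emph{single sign-change property} is never proved, and in fact it cannot be, because the statement you are trying to prove is false --- the paper's own treatment of this conjecture is a disproof. In Section~\ref{sec:counter} the paper takes a clique $K_k$, deletes a Hamiltonian cycle, and joins every clique vertex to the endpoint $u_1$ of a pendant path $u_1u_2\ldots u_{\ell+1}$, giving a graph $G_{k,\ell}$. In this graph almost all edges have Szeged product $9$ while almost all pairs are at distance $1$ or $2$, so for small fixed $\epsilon>0$ one gets $h(\epsilon)\sim\binom{k}{2}(9^{\epsilon}-1)>0$; on the other hand the path pairs contribute roughly $k\,\ell^{\alpha+1}/(\alpha+1)$ to $\Wa$ against only roughly $k^{\alpha}\ell^{\alpha+1}/(\alpha+1)$ to $\Sza$, so when $k^{0.5+\epsilon}<\ell<k^{1-\epsilon}$ and $k$ is large one gets $h(1-\epsilon)\sim -k\,\ell^{2-\epsilon}/(2-\epsilon)<0$. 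Combined with $h(0)<0$ and $h(1)>0$, the intermediate value theorem forces at least three zeros of $h$ in $(0,1)$; a concrete instance is $G_{520,82}$. In your coefficient language, the sequence $(c_j)$ of such a graph has sign pattern roughly $-,+,-,+$: negative for small $j$, positive at $j=9$ (about $\binom{k}{2}$ edges), negative across the path distances $3,\ldots,\ell+1$, and positive again at the large Szeged products $(k+i)(\ell+1-i)$ of the path edges. So the Descartes-type bound you invoke permits --- and this graph realises --- more than one zero.

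The parts of your proposal that are actual proofs are sound, and they closely parallel what the paper does with the same ideas: $f(0)<0\le f(1)$ is the paper's starting point, and your reduction ``a single crossing forces a unique zero'' is in the same spirit as the paper's Theorem~\ref{thr:majorization_x^a_case}, condition~\ref{cond1} and Theorem~\ref{derivative}, which the paper uses to \emph{prove} the conjecture for bipartite graphs, block graphs, edge-transitive graphs, graphs of diameter $2$, sufficiently sparse graphs, and a.a.s.\ random graphs. Your instinct that a counterexample hides among non-bipartite, non-block graphs is exactly right, but one caveat about your proposed search: testing \emph{small} such graphs would likely be fruitless, since the conjecture holds for all graphs of diameter $2$ and for almost all graphs; the counterexamples must be engineered at scale, with one dense block (the clique minus a Hamiltonian cycle) pushing $h$ positive at small $\alpha$ and one long sparse appendage (the pendant path) pulling it negative near $\alpha=1$.
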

\begin{conj}[{\cite[Conjecture 6]{HKS19}}]\label{weak-conj}For every non-complete graph $G$ and every $\alpha>1$ we have $\Sza(G)>\Wa(G)$.
\end{conj}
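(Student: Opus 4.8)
The plan is to prove the following majorization statement, which is strictly stronger and also recovers $\Sz(G)\ge W(G)$ at $\alpha=1$: for every connected graph the multiset of edge-products $\{p_e := n_x(y)\,n_y(x) : e=xy\in E(G)\}$ weakly majorizes the multiset of pairwise distances $\{d(u,v)\}$. Since $x\mapsto x^\alpha$ is convex and increasing for $\alpha\ge1$, weak majorization yields $\Sza(G)=\sum_e p_e^\alpha \ge \sum_{\{u,v\}} d(u,v)^\alpha = \Wa(G)$. The first step is the standard reinterpretation: expanding the product shows $p_e$ equals the number of unordered pairs \emph{split} by $e$ (one endpoint strictly closer to $x$, the other strictly closer to $y$), and every edge on a geodesic between $u$ and $v$ splits $\{u,v\}$ --- this is the double counting behind $\Sz\ge W$.

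To make the majorization concrete I would pass to tail counts. Put $M_s=|\{e: p_e\ge s\}|$ and $N_s=|\{\{u,v\}: d(u,v)\ge s\}|$. Writing $x^\alpha=\sum_{k=1}^{x}\Delta_k$ with $\Delta_k=k^\alpha-(k-1)^\alpha$ gives $\Sza=\sum_k\Delta_k M_k$ and $\Wa=\sum_k\Delta_k N_k$. For $\alpha\ge1$ the increments $\Delta_k$ are nondecreasing (strictly increasing when $\alpha>1$), so summation by parts yields $\Sza-\Wa=\sum_{k\ge1}(\Delta_k-\Delta_{k-1})\,T_k$, where $\Delta_0:=0$ and $T_k:=\sum_{s\ge k}(M_s-N_s)$. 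Thus the entire theorem reduces to the tail inequalities $T_k\ge0$ for all $k\ge1$, equivalently
\[\sum_{e\in E(G)}(p_e-t)^+\ \ge\ \sum_{\{u,v\}}\bigl(d(u,v)-t\bigr)^+\qquad(t\ge0\text{ an integer}),\]
which is exactly the weak-majorization relation.

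The combinatorial heart is this tail inequality, and it is where I expect the real work to lie. Fix $t$ and, for each pair with $d(u,v)>t$, fix a geodesic $u=w_0,\dots,w_d=v$ and let $S_{uv}$ be its $d-t$ edges in positions $t+1,\dots,d$; then $\sum_{\{u,v\}}(d(u,v)-t)^+=\sum_e|\{\{u,v\}: e\in S_{uv}\}|$, so it suffices to bound the number of pairs charged to a fixed edge $e=xy$ by $(p_e-t)^+$. A pair charged to $e$ meets $e$ at a position $>t$, so, orienting $e$ appropriately, one endpoint lies in $N_x(y)$ at distance at least $t$ from $x$ and the other in $N_y(x)$; this injects the charged pairs into $A'\times N_y(x)$, where $A'=\{w\in N_x(y): d(w,x)\ge t\}$. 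Finally the $t$ immediate predecessors $w_{i-t},\dots,w_{i-1}$ of $x$ on such a geodesic all lie in $N_x(y)$ within distance $t-1$ of $x$, so $|A'|\le n_x(y)-t$ and hence $|A'|\,n_y(x)\le n_x(y)n_y(x)-t=p_e-t$ (using $n_y(x)\ge1$). The delicate point is precisely this per-edge absorption bound: one must exploit the product structure of $p_e$ and the geodesic positions together, and check that the predecessor vertices are genuinely distinct side-$x$ vertices not already counted in $A'$.

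It remains to upgrade $\ge$ to $>$ when $\alpha>1$ and $G$ is non-complete. For $\alpha>1$ every coefficient $\Delta_k-\Delta_{k-1}$ is strictly positive, so $\Sza(G)=\Wa(G)$ would force $T_k=0$ for all $k$, hence $M_k=N_k$ for all $k$; taking $k=1$ gives $|E(G)|=\binom{n}{2}$, \ie $G$ complete. Therefore $\Sza(G)>\Wa(G)$ for every non-complete $G$ and every $\alpha>1$, which is the conjecture.
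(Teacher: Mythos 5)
Your high-level strategy coincides with the paper's: reduce the conjecture to a (weak) majorization relation between $(n_x(y)\,n_y(x))_{xy\in E(G)}$ and $(d(u,v))_{u,v\in V}$, then exploit convexity and monotonicity of $x\mapsto x^\alpha$. The paper does this via Proposition~\ref{prop:n_maj_d} together with an adapted Karamata inequality (Corollary~\ref{thr:Karamata_adapted_integers}); your Abel summation over tail counts is a fine self-contained substitute, and your endgame (strict positivity of $\Delta_k-\Delta_{k-1}$ forcing $M_1=N_1$, hence $m=\binom{n}{2}$) is correct. The gap is in the combinatorial heart, the per-edge charging bound. When you fix a geodesic $u=w_0,\dots,w_d=v$, the orientation is fixed per \emph{pair}, not per edge, so a single edge $e=xy$ can receive charges from both directions: some charged pairs have their first endpoint in $N_x(y)$ at distance at least $t$ from $x$, while others have it in $N_y(x)$ at distance at least $t$ from $y$, and the latter need not land in $A'\times N_y(x)$ at all. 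Concrete failure: take the path $v_1v_2v_3v_4$, $e=v_2v_3$, $t=1$, and order the pairs as $(v_1,v_3)$, $(v_1,v_4)$, $(v_4,v_2)$. All three pairs are charged to $e$, but $A'=\{v_1\}$ and $N_{v_3}(v_2)=\{v_3,v_4\}$, so your claimed injection target $A'\times N_{v_3}(v_2)$ has only $2$ elements; the pair $\{v_2,v_4\}$ maps to $(v_2,v_4)$, which lies outside it. So the step ``this injects the charged pairs into $A'\times N_y(x)$'' is false as written (here the true bound $p_e-t=3$ is attained, so there is no slack to absorb the error).

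The per-edge inequality you need is nevertheless true, and your predecessor idea repairs it once applied on both sides. Map each charged pair to the ordered pair (its endpoint in $N_x(y)$, its endpoint in $N_y(x)$); this is injective, with image inside $\bigl(A'_x\times N_y(x)\bigr)\cup\bigl(N_x(y)\times A'_y\bigr)$, where $A'_x,A'_y$ are the two ``distance at least $t$'' sets. If both classes of charged pairs occur, the predecessor argument gives $|A'_x|\le n_x(y)-t$ and $|A'_y|\le n_y(x)-t$, whence the union has size $n_x(y)n_y(x)-(n_x(y)-|A'_x|)(n_y(x)-|A'_y|)\le p_e-t^2\le p_e-t$, using $t^2\ge t$ for nonnegative integers; if only one class occurs, your one-sided bound $(n_x(y)-t)\,n_y(x)\le p_e-t$ suffices. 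With that patch your argument becomes a complete proof that is genuinely different in its details from the paper's: Proposition~\ref{prop:n_maj_d} instead writes each distance as $d_i=\sum_{e\in P_i}n_e/n_e$ along a geodesic and bounds the total weight received by any edge by $1$ (at most $n_e$ chosen geodesics pass through $e$), which sidesteps the orientation issue entirely, while your tail-count route avoids Karamata's inequality altogether at the price of this more delicate counting.
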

Note that Conjecture \ref{strong-conj}, since it requires $\alpha_G\leq 1$, implies Conjecture \ref{weak-conj}. Complete graphs are excluded from the conjectures since if $G$ is complete we have $\Sza(G)=\binom{|G|}{2}=\Wa(G)$ for every $\alpha$.

If we consider $(d(u,v))_{u,v \in V(G)}$ and $(n_u(v)\cdot n_v(u))_{uv \in E(G)}$ as two sequences (of positive integers) $(x_i)_i$ and $(y_i)_i$ derived from a graph, then we are comparing the sums $\sum f(x_i)$ and $\sum f(y_i)$ where $f(x)=x^{\alpha}$. A famous inequality related to such sums is Karamata's inequality~\cite{Karamata}, which is a generalization of the better known inequality of Jensen.
Hri\v{n}\'{a}kov\'{a} et al.~\cite{HKS19} used Karamata's inequality to derive inequalities between $\Sz^{\alpha}(G)$ and $W^{\alpha}(G)$ for trees and bipartite graphs (when $\alpha>1$), proving that the strong conjecture holds for trees and the weak conjecture for bipartite graphs.

In Section~\ref{sec:weak6}, we prove that the weak conjecture is indeed true by a small adaptation of Karamata's inequality combined with a majorization result. In Section \ref{sec:counter} we describe a family of counterexamples to the strong conjecture. However, we are able to prove the strong conjecture for some special families of graphs in Section \ref{sec:strong}. In particular we show that it is true for bipartite graphs, block graphs, and all graphs in a moderately sparse regime. We show that it is true for almost all graphs in a very strong sense: we prove that it is true with high probability for the random graph $G(n,m)$ (that is, a uniformly random choice from the set of all connected graphs with $n$ vertices and $m=m(n)$ edges), even if $m$ is chosen adversarially.

Since this work was submitted, a different and independent proof of the weak conjecture by Vuki\'{c}evi\'{c} and Bulatovi\'{c} has appeared \cite{VB22}.

\section{Proof of the weak conjecture}\label{sec:weak6}
\subsection{An adaptation of Karamata's inequality}\label{sec:Karamata_revisited}
Karamata's inequality generalises Jensen's inequality for convex functions to majorizing sequences. In this article, we will find it convenient to define the notion of majorizing sequences in a slightly more general way, in the sense that the sequences do not need to have the same length or sum. Throughout the whole article, a sequence will always be ordered (from largest to smallest).
\begin{defi}[majorizing sequences]
	Let $(x_i)_{1\le i \le n}$ and $(y_i)_{1\le i \le m}$ be two sequences.
	Then the sequence $(x_i)_i$ \textit{majorizes} $(y_i)_i$ if for every positive integer $k$ it is true that
	$\sum_{1\le i \le k} x_i \ge \sum_{1\le i \le k} y_i$.
	In this summation $x_i$ and $y_j$ are taken to be zero if $i>n$ or $j>m$.
\end{defi}
With this definition in place, Karamata's inequality is as follows.
\begin{theorem}[Karamata's inequality]\label{karamata}Suppose $(x_i)_{1\le i\le n}$ and $(y_i)_{1\le i\le n}$ are decreasing sequences of equal length, taking values in some interval $I$, with $(x_i)_i$ majorizing $(y_i)_{i}$. Let $f:I\to\mathbb R$ be a convex function. If either of the following conditions are satisfied:
\begin{enumerate}[(i)]
\item\label{kar-orig} $\sum_{i=1}^n x_i=\sum_{i=1}^n y_i$ or
\item\label{kar-inc} $f$ is increasing on $I$,
\end{enumerate}
then $\sum_{i=1}^n f(x_i)\geq\sum_{i=1}^n f(y_i)$. Furthermore, provided $f$ is strictly convex, we have equality only if the two sequences are identical.
\end{theorem}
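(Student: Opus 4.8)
The plan is to prove Karamata's inequality in the generalized form stated, handling the two conditions (\ref{kar-orig}) and (\ref{kar-inc}) in a unified way via an Abel summation (summation by parts) argument. First I would set up the partial sums $X_k=\sum_{i=1}^k x_i$ and $Y_k=\sum_{i=1}^k y_i$, so that the majorization hypothesis reads $X_k\ge Y_k$ for every $k$. The key tool is the discrete analogue of the slope estimate for convex functions: for a convex $f$ on $I$, the divided differences $c_i:=\dfrac{f(x_i)-f(y_i)}{x_i-y_i}$ (suitably interpreted, or bounded by one-sided derivatives when $x_i=y_i$) are monotone in $i$ once the sequences are ordered. Concretely, I would use that for a convex function the slopes satisfy $\lambda_i\ge\lambda_{i+1}$ where $\lambda_i$ is chosen so that $f(x_i)-f(y_i)\ge\lambda_i(x_i-y_i)$, obtained from the supporting-line/subgradient inequality at an appropriate point.

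The main computation is then the Abel summation identity
\begin{align*}
\sum_{i=1}^n\bigl(f(x_i)-f(y_i)\bigr)&\ge\sum_{i=1}^n\lambda_i(x_i-y_i)\\
&=\sum_{i=1}^{n-1}(\lambda_i-\lambda_{i+1})(X_i-Y_i)+\lambda_n(X_n-Y_n).
\end{align*}
Under condition (\ref{kar-orig}), the final term vanishes because $X_n=Y_n$, and each summand $(\lambda_i-\lambda_{i+1})(X_i-Y_i)$ is nonnegative since $\lambda_i-\lambda_{i+1}\ge0$ by convexity and $X_i-Y_i\ge0$ by majorization; this gives the classical conclusion. Under condition (\ref{kar-inc}), the sums need not be equal, so I must additionally control the boundary term $\lambda_n(X_n-Y_n)$: here I would use that $f$ increasing forces the relevant slope $\lambda_n$ to be nonnegative, while $X_n-Y_n\ge0$ is again majorization with $k=n$, so this leftover term is also nonnegative. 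Summing the nonnegative contributions yields $\sum f(x_i)\ge\sum f(y_i)$ in both cases.

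For the equality clause, I would assume $f$ strictly convex and trace back when every inequality above becomes an equality. Strict convexity makes the slope inequalities $\lambda_i>\lambda_{i+1}$ strict whenever the sequences differ in the relevant positions, so each product $(\lambda_i-\lambda_{i+1})(X_i-Y_i)$ can vanish only if $X_i=Y_i$; combined with the vanishing of the subgradient slack $f(x_i)-f(y_i)-\lambda_i(x_i-y_i)$, one deduces $x_i=y_i$ for all $i$ by an inductive argument on the partial sums. I would carry this out by showing that $X_i=Y_i$ for all $i$ forces $x_i=y_i$ termwise, i.e.\ the two sequences coincide.

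The main obstacle I anticipate is the careful bookkeeping of the slope coefficients $\lambda_i$ when $x_i=y_i$ (where the divided difference is undefined and must be replaced by a one-sided derivative) and in ensuring the chosen $\lambda_i$ are genuinely monotone across indices where the two sequences cross; the subtle point is selecting subgradients consistently so that both the supporting-line bound $f(x_i)-f(y_i)\ge\lambda_i(x_i-y_i)$ and the monotonicity $\lambda_i\ge\lambda_{i+1}$ hold simultaneously. The second delicate point, specific to this generalized statement, is verifying $\lambda_n\ge0$ under the increasing hypothesis in (\ref{kar-inc}) even when $x_n\neq y_n$, which is where the extra assumption does its work.
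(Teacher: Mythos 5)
The paper does not actually prove Theorem \ref{karamata}: it is stated as a classical result, with condition (i) attributed to Karamata's original paper \cite{Karamata} and condition (ii) noted as well known and provable ``in the same way'' (citing \cite[Theorem 12.7]{PPTbook}). So your proposal is not competing with an internal argument; it reconstructs the standard Abel-summation proof that those references contain, and its main line is correct. One simplification: rather than supporting-line inequalities with slack, take $\lambda_i$ to be the exact divided difference $\lambda_i=\bigl(f(x_i)-f(y_i)\bigr)/(x_i-y_i)$ when $x_i\neq y_i$, and any subdifferential value (say $f'_+(x_i)$) when $x_i=y_i$. Then $f(x_i)-f(y_i)=\lambda_i(x_i-y_i)$ holds with equality, and the monotonicity $\lambda_i\geq\lambda_{i+1}$ follows directly from convexity of $f$ together with $x_{i+1}\leq x_i$ and $y_{i+1}\leq y_i$; this settles the consistency worry you raise at the end. (Note that majorization does not give $x_i\geq y_i$ termwise, but divided differences are symmetric in their two arguments, so this causes no trouble.) Your treatment of the boundary term is exactly right: under (i) it vanishes, and under (ii) an increasing $f$ has nonnegative divided differences, so $\lambda_n\geq 0$ while $X_n\geq Y_n$.

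One caveat on the equality clause: your claim that each product $(\lambda_i-\lambda_{i+1})(X_i-Y_i)$ ``can vanish only if $X_i=Y_i$'' is not literally true even for strictly convex $f$, because $\lambda_i=\lambda_{i+1}$ whenever $(x_i,y_i)=(x_{i+1},y_{i+1})$. The correct bookkeeping is: let $j$ be the first index with $x_j\neq y_j$, so majorization forces $x_j>y_j$; vanishing of the Abel terms then forces, by strict monotonicity of divided differences in each argument, that $(x_i,y_i)=(x_j,y_j)$ for all $i\geq j$. Hence $X_n-Y_n=(n-j+1)(x_j-y_j)>0$, which contradicts condition (i) directly; under condition (ii) it forces $\lambda_n(X_n-Y_n)=0$ and hence $\lambda_n=0$, which is impossible since a strictly convex nondecreasing function has strictly positive divided differences. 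This is a repairable imprecision in a sketch, not a wrong approach, but as written the inductive step you describe does not go through.
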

The inequality under condition \eqref{kar-orig} is Karamata's original formulation \cite{Karamata}; the sufficiency of the alternative condition \eqref{kar-inc} is well known and may be proved in the same way (see, for example, \cite[Theorem 12.7]{PPTbook}).

As a corollary, we obtain the following modified version for integer sequences.
\begin{corollary}\label{thr:Karamata_adapted_integers}
	Let $\vec{x}=(x_i)_{1\le i \le n}$ and $\vec{y}=(y_i)_{1\le i \le m}$ be two sequences of nonnegative integers such that the former majorizes the latter and let $t= \sum_i x_i - \sum_i y_i$.
	Let $f$ be a function satisfying $f(0)=0, f(1)=1$ and $f'(x),f"(x) \ge 0$ for all $x \in \mathbb R^+$.
	Then 
	\[\sum_i f(x_i) \ge \sum_i f(y_i) + t.\]
	If the two sequences are not equal (which is certainly the case when $t>0$) and $f$ is strictly convex, the inequality is strict as well.	
\end{corollary}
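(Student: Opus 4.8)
The plan is to reduce the statement to Karamata's inequality (\Cref{karamata}) by manufacturing from $\vec{x}$ and $\vec{y}$ two decreasing sequences of equal length and equal sum. Two properties of $f$ make this possible. First, $f(0)=0$ means that appending trailing zeros to either sequence changes neither $\sum_i f(x_i)$ nor $\sum_i f(y_i)$, so I may freely pad the shorter sequence with zeros to equalise lengths. Second, $f(1)=1$ lets me absorb the surplus $t$. Note that the majorization hypothesis, applied with $k$ at least the length of both sequences, gives $\sum_i x_i\ge\sum_i y_i$, so by integrality $t$ is a nonnegative integer. I would therefore let $\vec{y}'$ be the sequence obtained from $\vec{y}$ by inserting $t$ additional copies of the value $1$ and reordering decreasingly. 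Because $f(1)=1$ we have $\sum_i f(y'_i)=\sum_i f(y_i)+t$, and by construction $\sum_i y'_i=\sum_i y_i+t=\sum_i x_i$, so $\vec{x}$ and $\vec{y}'$ have the same sum.

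The technical heart is to verify that $\vec{x}$ still majorizes $\vec{y}'$; this is where integrality is essential, since it guarantees that the surplus consists of a whole number of unit entries that slot neatly into the decreasing order. Write $X_k=\sum_{i\le k}x_i$ and $Y'_k=\sum_{i\le k}y'_i$ (terms out of range being $0$), let $q$ be the number of positive entries of $\vec{y}$ and $s$ the number of positive entries of $\vec{x}$. For $k\le q$ the first $k$ entries of $\vec{y}'$ coincide with those of $\vec{y}$, so $Y'_k=Y_k\le X_k$ by the original majorization. For $k>q$ one has $Y'_k=\sum_i y_i+\min(k-q,t)$. If moreover $k\le s$, then $x_{q+1},\dots,x_k\ge 1$, so $X_k\ge X_q+(k-q)\ge\sum_i y_i+(k-q)\ge Y'_k$, using $X_q\ge Y_q=\sum_i y_i$; while if $k>s$ then $X_k=\sum_i x_i=\sum_i y_i+t\ge Y'_k$. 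Hence $Y'_k\le X_k$ for every $k$, as required.

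With majorization in hand, $\vec{x}$ and $\vec{y}'$ are decreasing, take values in $[0,\infty)$, and (after padding) share both length and sum, so the equal-sum case of \Cref{karamata} applied to the convex function $f$ gives $\sum_i f(x_i)\ge\sum_i f(y'_i)=\sum_i f(y_i)+t$, which is the desired inequality. For the strict version I would invoke the equality clause of \Cref{karamata}: when $f$ is strictly convex, equality can hold only if $\vec{x}=\vec{y}'$ as padded sequences, whence the inequality is strict whenever $\vec{x}\ne\vec{y}'$.

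I expect the majorization step of the second paragraph to be the main obstacle, together with the accompanying equality analysis, which deserves care. The point is that $\vec{x}\ne\vec{y}$ does not by itself preclude $\vec{x}=\vec{y}'$: these coincide exactly when the entire surplus of $\vec{x}$ over $\vec{y}$ consists of entries equal to $1$ (for instance $\vec{x}=(1,1,1)$ and $\vec{y}=(1,1)$), and in that degenerate case equality genuinely occurs. Thus strictness is cleanest to state relative to $\vec{y}'$, and $\vec{x}\ne\vec{y}$ suffices for strictness in every case except this one; in the intended application, where some entry exceeds $1$, no such degeneracy arises.
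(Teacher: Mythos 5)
Your proof is correct and is essentially the paper's own argument: insert $t$ copies of $1$ into $\vec{y}$, pad with zeros, check that the majorization survives the insertion (using integrality exactly as you do), and apply case \eqref{kar-orig} of Theorem~\ref{karamata}. If anything, your three-case verification of the majorization is spelled out more completely than the paper's, which only explicitly treats the indices $k+1,\dots,k+t$ after the last positive term of $\vec{y}$.

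Your final paragraph deserves emphasis, because it identifies a genuine defect in the statement itself rather than a weakness of your proof. As literally written, the strictness claim of the corollary is false: for $\vec{x}=(1,1,1)$ and $\vec{y}=(1,1)$ one has $t=1>0$ and $\vec{x}\neq\vec{y}$, yet $\sum_i f(x_i)=3=\sum_i f(y_i)+t$ for every admissible $f$, however strictly convex. The correct hypothesis is the one you isolate, namely $\vec{x}'\neq\vec{y}'$ (equivalently, $\vec{x}$ is not obtained from $\vec{y}$ by inserting $t$ ones); the paper's proof passes over this point silently. The paper's application is unharmed, though your stated reason (``some entry exceeds $1$'') is not quite the right safeguard --- $\vec{x}=(3,1,1)$, $\vec{y}=(3,1)$ is still degenerate --- rather, in the application the distance sequence $\vec{y}=(d_i)$ has $N$ positive entries while $\vec{x}=(n_i)$ has at most $m<N$, so $\vec{y}'$ has strictly more positive entries than $\vec{x}'$ and the padded sequences cannot coincide.
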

\begin{proof}
	We can extend the sequence $y_i$ with $t$ values equal to $1$ in the right spot (\ie such that the sequence is still ordered) and add zeros to one of the two sequences to produce sequences $\vec{x}',\vec{y}'$ of the same length. Suppose that $y_k$ is the last positive term of $\vec{y}$, so that the $t$ extra terms become $y'_{k+1},\ldots,y'_{k+t}$. Note that $\sum_{i=1}^ky'_i\leq\sum_{i=1}^k x'_i$, and for $1\leq s\leq t$, we have $\sum_{i=1}^{k+s}y'_i=s+\sum_{i=1}^{k}y_i$. If $x'_{i+s}>0$ then we have 
	$\sum_{i=1}^{k+s}x'_i\geq s+\sum_{i=1}^{k}x'_i\geq\sum_{i=1}^{k+s}y'_i,$ and if not we have 
	$\sum_{i=1}^{k+s}x'_i=\sum_{i\geq 1}x'_i\geq\sum_{i=1}^{k+s}y'_i.$
	Consequently the updated sequence $\vec{x}'$ does majorize $\vec{y}'$ and so Karamata's inequality \eqref{kar-orig} says that 
	\[\sum_i f(x_i) \ge \sum_i f(y_i) + t.\qedhere\]
\end{proof}

\subsection{A majorization relationship}

	As the reader may expect, the crucial fact we will need is the following majorization between the two sequences derived from the graph. We will write $m=|E(G)|$ and $N=\binom{|G|}{2}$ for the natural lengths of the two sequences; we will frequently find it convenient to extend $(n_e)_{e\in E(G)}$ to a sequence of length $N$ by adding zero terms.
	
	\begin{proposition}\label{prop:n_maj_d}
		For every graph $G$, the sequence $(n_u(v)\cdot n_v(u))_{uv\in E(G)}$ majorizes the sequence $(d(u,v))_{u,v \in V}$.
	\end{proposition}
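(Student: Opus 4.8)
The plan is to reduce the claimed majorization to a family of threshold (level-set) inequalities, and then to verify each of these by fixing one geodesic per pair and exploiting two properties of that geodesic.

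First I would record the elementary identity, valid for any finite nonnegative sequence $(x_i)$ written in decreasing order,
\[\sum_{i\le k}x_i=\min_{t\ge 0}\Bigl(kt+\sum_i (x_i-t)_+\Bigr),\]
where $(\,\cdot\,)_+$ denotes positive part (the minimum is attained at any $t\in[x_{k+1},x_k]\subseteq[0,\infty)$). Applying this to both of our sequences, one sees that to prove that $(n_u(v)\cdot n_v(u))_{uv\in E(G)}$ majorizes $(d(u,v))_{u,v\in V}$ it is enough to establish, for every real $t\ge 0$, the single inequality
\[\sum_{uv\in E(G)}\bigl(n_u(v)n_v(u)-t\bigr)_+\ \ge\ \sum_{\{a,b\}}\bigl(d(a,b)-t\bigr)_+,\]
the right-hand sum being over unordered pairs. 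Writing $n_e:=n_u(v)n_v(u)$ for $e=uv$, I note that the case $t=0$ is exactly the known global inequality $\Sz(G)\ge W(G)$.

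For each pair $p=\{a,b\}$ I would fix one shortest $a$--$b$ path $\pi_p=w_0w_1\cdots w_{d}$, where $d=d(a,b)$, and write $e_i=w_{i-1}w_i$ for its $i$-th edge. Two facts then drive the argument. First, each $e_i$ separates the endpoints: $a=w_0$ is strictly closer to $w_{i-1}$ and $b=w_d$ strictly closer to $w_i$, and more precisely $w_0,\dots,w_{i-1}$ all lie on the $w_{i-1}$-side while $w_i,\dots,w_d$ lie on the $w_i$-side, so that $n_{e_i}\ge i(d+1-i)$. Second, if $g_e:=|\{p:e\in\pi_p\}|$ counts the chosen paths running through a fixed edge $e=uv$, then the corresponding pairs are \emph{distinct} separated pairs of $e$, and there are only $n_e$ such pairs in all; hence $g_e\le n_e$.

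Finally I would combine these through the weights $X_e:=(n_e-t)_+/n_e$. Since $g_e\le n_e$ yields $\sum_e (n_e-t)_+\ge\sum_e g_eX_e$, and counting incidences path-by-path gives $\sum_e g_eX_e=\sum_p\sum_{i=1}^{d(a,b)}X_{e_i}$, it suffices to prove the per-pair bound $\sum_{i=1}^{d}X_{e_i}\ge(d-t)_+$. When $d\le t$ this is trivial as each $X_{e_i}\ge0$; when $d>t$, the estimate $X_e\ge 1-t/n_e$ combined with $n_{e_i}\ge i(d+1-i)$ makes each summand at least $1-t/\bigl(i(d+1-i)\bigr)$, so the partial-fraction identity $\sum_{i=1}^d\frac1{i(d+1-i)}=\frac{2H_d}{d+1}$ (with $H_d=\sum_{i\le d}1/i$) reduces the claim to $2H_d\le d+1$, which follows for all $d\ge1$ by a one-line induction. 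I expect the genuine obstacle to be precisely this passage from the global statement $\Sz(G)\ge W(G)$ to per-level control: the two sequences are indexed by different sets, and the crude observation that each pair is separated by at least $d(a,b)$ edges only recovers the $t=0$ case. It is the geodesic bookkeeping in the two facts above, together with the harmonic estimate, that upgrades this to the full majorization.
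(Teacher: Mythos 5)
Your proof is correct, but it takes a genuinely different route from the paper's. The two arguments share the same combinatorial skeleton: fix one shortest path per pair, and bound the number of chosen paths through a fixed edge $e=uv$ by $n_e$, via the injection of such pairs into $N_u(v)\times N_v(u)$ (your Fact B; in the paper this is the observation that each $\Lambda_e$ is at most $1$). From there you diverge. The paper attacks the partial sums directly: it writes each distance as $d(x,y)=\sum_{e\in P}\lambda_e n_e$ with $\lambda_e=1/n_e$, uses only the crude bound $d(x,y)\le n_e$ for edges $e$ on an $x$--$y$ geodesic to conclude $\sum_{e\in P}\lambda_e\le 1$, and then notes that $\sum_{i\le k}d_i=\sum_e\Lambda_e n_e$ where the coefficients $\Lambda_e$ lie in $[0,1]$ and have total at most $k$, so this weighted sum is dominated by the sum of the $k$ largest values $n_i$. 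You instead pass to the level-set (threshold) characterization of weak majorization, reducing to the inequalities $\sum_e(n_e-t)_+\ge\sum_p\bigl(d(p)-t\bigr)_+$ for all $t\ge 0$, and you verify these per geodesic; this forces you to use the sharper positional bound $n_{e_i}\ge i(d+1-i)$ together with the harmonic estimate $2H_d\le d+1$, neither of which the paper needs. Both routes are sound (your threshold reduction, the injectivity bound $g_e\le n_e$, the positional bound, and the partial-fraction computation all check out). What your route buys: the bound $n_{e_i}\ge i(d+1-i)$ is a strictly stronger structural fact than the paper's $n_e\ge d(x,y)$, and is of independent interest --- a version of it reappears in the paper's proof of Theorem~\ref{thr:sparsecase}, where middle edges of a diametral path are shown to satisfy $n_e\ge\frac{3}{16}\diam(G)^2$. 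What it costs: an extra equivalence step and a calculation, where the paper's fractional-weighting argument closes the partial-sum inequality in one stroke from weaker hypotheses.
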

	\begin{proof}
		For every edge $uv$, let $n_{uv}=n_u(v)\cdot n_v(u)$.
		Let $(n_i)_{1 \le i \le m}$ (where $m=\lvert E(G) \rvert$) be the ordered sequence (from largest to smallest) of $(n_e)_e$ and let $(d_i)_i$ be the ordered sequence of the distances $(d(u,v))_{u,v \in V}$.
		The following claim essentially gives the intuition behind the majorization of the sequences, the property that the $n_i$ are larger than $d_i$ in some structured way.
		
		\begin{claim}\label{clm:d<n*n}
			For every pair of vertices $x$ and $y$ and every edge $e=uv$ on a shortest path between $x$ and $y$, we have $d(x,y) \le n_e.$
		\end{claim}
		
		\begin{claimproof}
			Note that there are $d(x,y)+1$ vertices on a shortest path from $x$ and $y$, all of them belonging to $N_u(v)$ or $N_v(u)$. Hence $d(x,y)+1 \le n_u(e)+n_v(e)$, from which $d(x,y) \le n_u(v)\cdot n_v(u)=n_e$ follows.
		\end{claimproof}
		
		This implies that $d(x,y)=\sum_{e \in P} \lambda_e n_e$, where $P$ is a shortest path (containing $d(x,y)$ edges by definition) between $x$ and $y$ and $\lambda_e = \frac{1}{n_e} \le \frac 1 {d(x,y)}$ (by Claim~\ref{clm:d<n*n}), so $\sum_{e \in P} \lambda_e \le 1.$
		
		As a consequence, we have 
		\begin{align*}
		\sum_{i=1}^k d_i &= \sum_{i=1}^k \sum_{e \in P_i} \lambda_e n_e\\
		&= \sum_{e \in E} \Lambda_e n_e,
		\end{align*}
		where $P_i$ is the chosen shortest path corresponding to the distance $d_i$.
		Here every $\Lambda_e$ equals the number of shortest paths $P_i$ through $e$ (so at most $n_e$) times $\lambda_e$, which is bounded by $1$. Also we know $\sum_{e \in E} \Lambda_e = \sum_{i=1}^k \sum_{e \in P_i} \lambda_e \le k$.
		This implies $\sum_{e \in E} \Lambda_e n_e \le \sum_{i=1}^k n_i$.
	\end{proof}
	
	As a consequence, we note that Conjecture \ref{strong-conj} is true in a slightly stronger form.
	\begin{theorem}
		For every $\alpha>1$ and graph $G$ which is not a complete graph, one has $\Sza(G)-\Wa(G)>\Sz(G)-W(G)\geq 0$.
	\end{theorem}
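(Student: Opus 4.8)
The plan is to apply Corollary~\ref{thr:Karamata_adapted_integers} directly to the two sequences furnished by Proposition~\ref{prop:n_maj_d}, taking the convex function $f(x)=x^\alpha$. Write $\vec{x}=(n_u(v)\cdot n_v(u))_{uv\in E(G)}$ for the (ordered) Szeged sequence and $\vec{y}=(d(u,v))_{u,v\in V}$ for the (ordered) distance sequence, and set $t=\sum_i x_i-\sum_i y_i$. Since the entries of $\vec{x}$ sum to $\Sz(G)$ and those of $\vec{y}$ sum to $W(G)$, we have $t=\Sz(G)-W(G)$. By Proposition~\ref{prop:n_maj_d}, $\vec{x}$ majorizes $\vec{y}$; taking $k$ larger than both lengths in the definition of majorization shows $t\geq 0$, which already recovers the classical inequality $\Sz(G)\geq W(G)$ and establishes the rightmost inequality of the statement.

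Next I would verify that $f(x)=x^\alpha$ meets the hypotheses of Corollary~\ref{thr:Karamata_adapted_integers} for $\alpha>1$: indeed $f(0)=0$, $f(1)=1$, and $f'(x)=\alpha x^{\alpha-1}\geq 0$, $f''(x)=\alpha(\alpha-1)x^{\alpha-2}\geq 0$ on $\mathbb{R}^+$; moreover $f$ is strictly convex there, since $f''>0$ on $(0,\infty)$. The corollary then yields
\[\Sza(G)=\sum_i f(x_i)\geq\sum_i f(y_i)+t=\Wa(G)+\bigl(\Sz(G)-W(G)\bigr),\]
which is the desired inequality in its non-strict form.

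It remains to promote this to a strict inequality when $G$ is not complete. The corollary guarantees strictness provided $f$ is strictly convex (already checked) and the two sequences are not equal after padding to a common length with zeros. Here the crucial point is a counting one: every distance between distinct vertices of a connected graph is at least $1$, so $\vec{y}$ has exactly $N=\binom{|G|}{2}$ positive entries, whereas each Szeged term $n_u(v)\cdot n_v(u)$ is at least $1$ but there are only $m=|E(G)|$ of them, so the padded $\vec{x}$ has exactly $m$ positive entries. If $G$ is non-complete then $m<N$, so after padding $\vec{x}$ with $N-m$ zeros its smallest entry is $0$ while that of $\vec{y}$ is positive; hence the two sequences differ. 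Strict convexity of $f$ then gives $\Sza(G)>\Wa(G)+\bigl(\Sz(G)-W(G)\bigr)$, i.e.\ $\Sza(G)-\Wa(G)>\Sz(G)-W(G)$, and combining with $\Sz(G)-W(G)\geq 0$ completes the argument.

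Since the genuine content has already been isolated in Proposition~\ref{prop:n_maj_d} and Corollary~\ref{thr:Karamata_adapted_integers}, no serious obstacle remains; the only step requiring care is the strictness, and in particular the observation that non-completeness is precisely the condition $m<N$ forcing the padded sequences to differ. I would emphasise that strictness does \emph{not} follow from $t>0$ alone, as block graphs have $t=\Sz(G)-W(G)=0$; the counting argument above is therefore genuinely needed to cover that case.
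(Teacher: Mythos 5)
Your proof is correct and follows essentially the same route as the paper's: both apply Corollary~\ref{thr:Karamata_adapted_integers} with $f(x)=x^\alpha$ to the sequences related by Proposition~\ref{prop:n_maj_d}, identify $t=\Sz(G)-W(G)\geq 0$, and obtain strictness from the fact that non-completeness ($m<N$) forces the padded sequences to differ. Your write-up merely spells out the details (verification of the hypotheses on $f$, and the counting of positive entries) that the paper's terser proof leaves implicit.
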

	\begin{proof}
		Since $\lvert E(G) \rvert \neq\binom{n}{2}$, the sequences $(n_i)_i$ and $(d_i)_i$ are not equal. Let $t=\sum_i n_i - \sum_i d_i\geq 0$. 
		Since $f(x)=x^{\alpha}$ satisfies $f(0)=0$ and $f(1)=1$ and is a positive, strictly convex increasing function for $x\ge 0$ and $\alpha>1$, by applying Corollary~\ref{thr:Karamata_adapted_integers} we have $\Sz^{\alpha}(G)>W^{\alpha}(G)+t$.
	\end{proof}

\section{Graphs which satisfy the strong conjecture}\label{sec:strong}

	In Section~\ref{sec:weak6}, we noted that Conjecture \ref{weak-conj} was essentially an inequality for majorizing sequences instead of an inequality on graphs, once one observes Proposition~\ref{prop:n_maj_d} is true. 
	This cannot be the case for Conjecture \ref{strong-conj}.
	One can take the sequences $\vec{x}=\{625,81,81,16\}$ and compare with $\vec{y}=\{256,256,256,1\}$, the first one majorizing the second, but $\sum x_i ^{\alpha} = \sum y_i ^{\alpha}$ being true for $0, \frac 14$ and a number close to $0.88$.
	
	Fix a connected, non-complete graph $G$ with $n$ vertices and let $h(\alpha)=\Sza(G)-\Wa(G)$.
	Since $h(\alpha)$ is a continuous function with $h(0)<0$ and $h(1) \ge 0$, by the intermediate value theorem there is at least one value of $\alpha$ for which $h(\alpha)=0$ (and at least one such value lies in $(0,1]$); the strong conjecture is therefore equivalent to $\alpha$ being unique. We therefore give a sufficient condition, in terms of majorization, for this to be the case.
	\begin{theorem}\label{thr:majorization_x^a_case}
		Let $\alpha$ be a value such that $\Sza(G)=\Wa(G)$ for a non-complete graph $G$.
		If the sequence $(n_i^{\alpha})_i$ does majorize the sequence $(d_i^{\alpha})_i$, then $\alpha$ is unique.
	\end{theorem}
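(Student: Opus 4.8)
The plan is to show that the single hypothesis at the given root $\alpha_0$ forces the sign of $h(\alpha)=\Sza(G)-\Wa(G)$ at every other exponent, so that $\alpha_0$ is the only zero. The engine is Karamata's inequality applied after a reparametrization. Write $a_i=n_i^{\alpha_0}$ and $b_i=d_i^{\alpha_0}$, with the $n$-sequence padded by zeros to length $N$. The theorem's hypothesis says that $(a_i)_i$ majorizes $(b_i)_i$, while the assumption $h(\alpha_0)=0$ says $\sum_i a_i=\sum_i b_i$. Since $G$ is non-complete we have $m<N$, so $a$ has a zero entry where $b$ is positive; in particular the two sequences are not identical, which is what will later give strict inequalities.

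For $\alpha>0$ I would substitute $\alpha=\beta\alpha_0$ with $\beta>0$, so that $a_i^{\beta}=n_i^{\alpha}$ and $b_i^{\beta}=d_i^{\alpha}$, giving $h(\beta\alpha_0)=\sum_i a_i^{\beta}-\sum_i b_i^{\beta}$. When $\beta>1$ the map $x\mapsto x^{\beta}$ is strictly convex (and increasing) on $[0,\infty)$, so Theorem~\ref{karamata} under the equal-sum condition~\eqref{kar-orig} yields $\sum_i a_i^{\beta}>\sum_i b_i^{\beta}$, i.e.\ $h(\alpha)>0$ for every $\alpha>\alpha_0$. When $0<\beta<1$, applying the same theorem to the strictly convex function $x\mapsto -x^{\beta}$ gives $\sum_i a_i^{\beta}<\sum_i b_i^{\beta}$, i.e.\ $h(\alpha)<0$ for every $\alpha\in(0,\alpha_0)$. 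The strictness, coming from the sequences not being identical, is exactly what rules out any further zero in $(0,\infty)$.

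It remains to treat $\alpha\le 0$, where the majorization hypothesis is not needed. Each factor satisfies $n_u(v)\cdot n_v(u)\ge 1$, so for $\alpha<0$ every summand of $\Sza(G)$ is at most $1$ and hence $\Sza(G)\le m$; on the other hand the $m$ edges each contribute a term $1^{\alpha}=1$ to $\Wa(G)$, while the remaining $N-m\ge 1$ pairs (which exist since $G$ is non-complete) contribute strictly positive terms, so $\Wa(G)>m\ge\Sza(G)$ and $h(\alpha)<0$. The boundary case is immediate, $h(0)=m-N<0$. Combined with the previous paragraph this shows $h(\alpha)<0$ for all $\alpha<\alpha_0$ and $h(\alpha)>0$ for all $\alpha>\alpha_0$, so $\alpha_0$ is the unique zero of $h$ (and necessarily $\alpha_0>0$), which is the assertion; note that this in fact recovers the full sign pattern of the strong conjecture, not merely uniqueness.

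The only genuinely delicate point is the reparametrization idea: recognizing that an equality-plus-majorization statement holding at the \emph{single} exponent $\alpha_0$ can be transported to all exponents by the substitution $x\mapsto x^{\alpha/\alpha_0}$, after which convexity or concavity of the power function does all the work. The part most likely to hide a slip is the bookkeeping with the padded zero entries: one must keep $x^{\beta}$ in the range $[0,\infty)$ where $f(0)=0$ and Karamata applies, which is precisely what forces $\beta>0$ and thereby mandates the separate, elementary treatment of $\alpha\le 0$.
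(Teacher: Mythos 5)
Your proof is correct and rests on exactly the same engine as the paper's: rescaling exponents by the ratio $c=\alpha/\alpha_0$ converts the hypothesis (majorization plus equal sums at $\alpha_0$) into an application of Karamata's inequality under condition \eqref{kar-orig} with the strictly convex function $x^c$ for $c>1$ and the strictly concave one for $0<c<1$, which is precisely the paper's argument. The only difference is minor but pleasant: where the paper cites \cite[Proposition 8]{HKS19} to restrict attention to positive exponents, you rule out $\alpha\le 0$ by an elementary direct estimate ($\Sza(G)\le m<\Wa(G)$), making the proof self-contained and incidentally yielding the full sign pattern of $h$ rather than bare uniqueness.
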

	\begin{proof}
	Assume $\beta$ also satisfies $\Sz^{\beta}(G)=W^{\beta}(G)$. 
	Let $c= \frac{\beta}{\alpha}$; by \cite[Proposition 8]{HKS19} we have $\alpha,\beta>0$ and so $c>0$.
	Then 
	\begin{equation}\label{eq:x^c}
	\sum_i (n_i^{\alpha})^c = \sum_i (d_i^{\alpha})^c,
	\end{equation}
	where we extend the former sum with zero terms to give sequences of equal length.
	For $c>1$ the function $f(x)=x^c$ is strictly convex, while for $c<1$ it is strictly concave.
	By Theorem \ref{karamata} \eqref{kar-orig}, in both cases \eqref{eq:x^c} would not be true, so this implies $c=1$ and hence $\alpha=\beta.$
	\end{proof}	
	Together with Proposition~\ref{prop:n_maj_d}, this immediately gives the following.
	\begin{corollary}If $G$ is a non-complete block graph then $G$ satisfies Conjecture \ref{strong-conj} with $\alpha_G=1$.\end{corollary}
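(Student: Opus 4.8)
The plan is to combine the two most recently proved results, namely Theorem~\ref{thr:majorization_x^a_case} and Proposition~\ref{prop:n_maj_d}, and to use the known characterisation of block graphs as exactly those connected graphs satisfying $\Sz(G)=W(G)$. First I would observe that since $G$ is a block graph, the equality $\Sz(G)=W(G)$ holds by the Dobrynin--Gutman classification cited in the introduction; this is precisely the statement that $\sum_{uv\in E(G)}n_u(v)n_v(u)=\sum_{u,v\in V}d(u,v)$, \ie that $\sum_i n_i=\sum_i d_i$. In terms of the exponent $\alpha$, this says $h(1)=\Sz^1(G)-W^1(G)=0$, so $\alpha=1$ is a value at which $\Sza(G)=\Wa(G)$, providing the base point required to invoke Theorem~\ref{thr:majorization_x^a_case}.

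Next I would check that the majorization hypothesis of Theorem~\ref{thr:majorization_x^a_case} is met at $\alpha=1$. Here I would simply quote Proposition~\ref{prop:n_maj_d}, which states that for every graph the sequence $(n_i)_i$ majorizes $(d_i)_i$; at $\alpha=1$ the sequences $(n_i^\alpha)_i$ and $(d_i^\alpha)_i$ are exactly $(n_i)_i$ and $(d_i)_i$, so the required majorization holds verbatim. With both hypotheses of Theorem~\ref{thr:majorization_x^a_case} verified at the value $\alpha=1$, the theorem yields that $\alpha=1$ is the \emph{unique} exponent at which $\Sza(G)=\Wa(G)$.

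Finally I would translate uniqueness into the three-part statement of Conjecture~\ref{strong-conj} with $\alpha_G=1$. Recall that $h(\alpha)=\Sza(G)-\Wa(G)$ is continuous, with $h(0)<0$ (since there are fewer edge-terms than distance-terms for a non-complete graph) and $h(1)=0$. Because $\alpha=1$ is the only zero of $h$ in $(0,\infty)$, continuity and the sign of $h(0)$ force $h(\alpha)<0$ for all $\alpha<1$ and $h(\alpha)>0$ for all $\alpha>1$; the strict positivity for $\alpha>1$ also follows directly from the strengthened weak conjecture proved earlier. This gives exactly $\alpha_G=1$ with the sign pattern demanded.

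I anticipate that there is no serious obstacle here, since the corollary is designed to fall out immediately from the two quoted results; the only point requiring a little care is the bookkeeping at $\alpha=1$, namely confirming that the majorization of the raw sequences (Proposition~\ref{prop:n_maj_d}) is precisely the $\alpha=1$ instance of the hypothesis $(n_i^\alpha)_i$ majorizes $(d_i^\alpha)_i$, and that the block-graph equality $\Sz(G)=W(G)$ supplies the needed zero of $h$. One subtlety worth flagging is that Theorem~\ref{thr:majorization_x^a_case} only asserts uniqueness of the crossing exponent, whereas Conjecture~\ref{strong-conj} additionally prescribes the direction of the inequality on each side of $\alpha_G$; bridging that gap is where the continuity-plus-sign argument, rather than the majorization machinery, does the work.
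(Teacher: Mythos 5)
Your proposal is correct and is essentially the argument the paper intends: the paper derives this corollary ``immediately'' from Theorem~\ref{thr:majorization_x^a_case} together with Proposition~\ref{prop:n_maj_d}, with the Dobrynin--Gutman characterisation ($\Sz(G)=W(G)$ exactly for block graphs) supplying the zero of $h$ at $\alpha=1$, precisely as you spell out. Your closing continuity-plus-sign bookkeeping (using $h(0)<0$ and uniqueness of the zero) is the same implicit step the paper relies on when it converts uniqueness of the crossing exponent into the three-part statement of Conjecture~\ref{strong-conj}.
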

	
	As further examples for the the applicability, let us note that either of the following conditions on $G$ is sufficient (but not necessary) to conclude that $(n_i^{\alpha})_i$ majorizes $(d_i^{\alpha})_i$, where $\alpha$ satisfies $h(\alpha)=0$.
	\begin{enumerate}[I]
		\item\label{cond1} There exists some index $j$ such that $n_i\geq d_i$ if $i\leq j$ and $n_i\leq d_i$ if $i>j$.
		\item\label{cond2} For every $1 \le j \le m$, it holds that $\prod_{i=1}^{j} n_i \ge \prod_{i=1}^{j}d_i$. 
	\end{enumerate}
	To see that \ref{cond2} is sufficient, note that it implies that $(\log n_i)_{1\leq i\leq m}$ majorizes $(\log d_i)_{1\leq i\leq m}$, and hence (since $f(x)=\exp(\alpha x)$ is increasing and convex) that $(n_i^\alpha)_{1\leq i\leq m}$ majorizes $(d_i^\alpha)_{1\leq i\leq m}$. Since we also have $\sum_{i=1}^k n_i^\alpha=\Sza(G)=\Wa(G)\geq\sum_{i=1}^k d_i^\alpha$ for each $k>m$, we have the required majorization property.
	
	For graphs satisfying \ref{cond1}, much more is true.
	\begin{theorem}\label{derivative}If $G$ satisfies condition \ref{cond1} above, then for every value of $\alpha>0$ satisfying $h(\alpha)\geq h(0)$ we have $h'(\alpha)>0$.
	\end{theorem}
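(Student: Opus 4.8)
The plan is to differentiate $h$ and re-express the derivative through the convex function $g(t)=t\ln t$. Since $\frac{d}{d\alpha}x^{\alpha}=x^{\alpha}\ln x=\tfrac1\alpha g(x^{\alpha})$, writing $u_i=n_i^{\alpha}$ and $v_i=d_i^{\alpha}$ gives $h'(\alpha)=\tfrac1\alpha\bigl(\sum_i g(u_i)-\sum_i g(v_i)\bigr)$. The decisive bookkeeping step is how to extend the shorter Szeged sequence to length $N$: rather than padding $(u_i)$ with zeros, I would pad it with $N-m$ copies of $1$. Because $g(1)=0$, this changes neither $\sum_i g(u_i)$ nor $\Sza(G)$, but it arranges that $\sum_i u_i-\sum_i v_i=\bigl(\Sza(G)+(N-m)\bigr)-\Wa(G)=h(\alpha)-h(0)$, which is exactly the quantity the hypothesis controls (recall $h(0)=m-N$). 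The padded sequences remain decreasing, and condition \ref{cond1} together with the monotonicity of $t\mapsto t^{\alpha}$ yields a single crossing index $j$ with $u_i\ge v_i$ for $i\le j$ and $u_i\le v_i$ for $i>j$; here $j\ge 1$ because Claim \ref{clm:d<n*n} forces $n_1\ge d_1$, and the padding entries $u_i=1\le v_i$ sit on the correct ($i>j$) side since every $d_i\ge1$.

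The core is a one-sided tangent estimate at the threshold $\tau=v_j=d_j^{\alpha}$: I claim $g(u_i)-g(v_i)\ge g'(\tau)\,(u_i-v_i)$ for every $i$. For $i\le j$ we have $u_i\ge v_i\ge\tau$, so the tangent inequality for convex $g$ gives $g(u_i)-g(v_i)\ge g'(v_i)(u_i-v_i)\ge g'(\tau)(u_i-v_i)$, using $g'(v_i)\ge g'(\tau)$ and $u_i-v_i\ge0$. For $i>j$ we have $u_i\le v_i\le\tau$, and the same bound $g(u_i)-g(v_i)\ge g'(v_i)(u_i-v_i)$ now combines with $g'(v_i)\le g'(\tau)$ and $u_i-v_i\le0$ to give the identical conclusion, the inequality between $g'(v_i)$ and $g'(\tau)$ being flipped by the nonpositive factor $u_i-v_i$. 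Summing over $i$ and dividing by $\alpha$ then yields $h'(\alpha)\ge\tfrac{g'(\tau)}{\alpha}\sum_i(u_i-v_i)=\tfrac{g'(\tau)}{\alpha}\bigl(h(\alpha)-h(0)\bigr)$, where $g'(\tau)=\alpha\ln d_j+1\ge1>0$.

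Since the hypothesis is precisely $h(\alpha)\ge h(0)$, this already gives $h'(\alpha)\ge0$, and strict positivity whenever $h(\alpha)>h(0)$. For the remaining case $h(\alpha)=h(0)$ I would invoke strict convexity of $g$: the tangent estimate is strict at any index with $u_i\neq v_i$, so it is enough to rule out $u_i=v_i$ for all $i$. But that would force $n_i=d_i$ for $i\le m$ and $d_i=1$ for $i>m$, whence at $\alpha=1$ we would obtain $\Sz(G)-W(G)=-\sum_{i>m}d_i=-(N-m)<0$, contradicting $\Sz(G)\ge W(G)$; hence some term is strict and $h'(\alpha)>0$. The one genuinely delicate point is the choice to pad with $1$'s: this is what converts the naive estimate involving $h(\alpha)$ into one involving $h(\alpha)-h(0)$, and it is the only place the value $h(0)=m-N$ enters. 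The single-crossing hypothesis is exactly what licenses the use of one common tangent threshold $\tau=v_j$ for all indices, and this is where condition \ref{cond1} is essential.
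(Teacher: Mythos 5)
Your proof is correct, and its skeleton coincides with the paper's: you pad the Szeged sequence with $N-m$ ones (the paper's $\tilde n_i$), rewrite $h'(\alpha)=\alpha^{-1}\bigl(\sum_i g(u_i)-\sum_i g(v_i)\bigr)$ with $g(t)=t\log t$, and use the classical inequality $\Sz(G)\ge W(G)$ to rule out the degenerate case where all $u_i=v_i$ --- all three moves appear in the paper's proof as well. The genuine difference is the final step. The paper uses condition \ref{cond1} together with the hypothesis $h(\alpha)\ge h(0)$ (which supplies exactly the missing total-sum inequality) to show that $(\tilde n_i^\alpha)_i$ majorizes $(d_i^\alpha)_i$, and then cites Karamata's inequality, Theorem \ref{karamata}~\eqref{kar-inc}, for the increasing strictly convex function $g$ on $[1,\infty)$. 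You instead prove the needed comparison from scratch via the single tangent-line estimate $g(u_i)-g(v_i)\ge g'(\tau)(u_i-v_i)$ at the crossing value $\tau=d_j^\alpha$, which is precisely what the one-crossing structure of condition \ref{cond1} licenses. This buys two things: the argument is self-contained (it inlines the only case of Karamata that is actually needed), and it is quantitative, yielding $h'(\alpha)\ge\alpha^{-1}g'(\tau)\bigl(h(\alpha)-h(0)\bigr)\ge\alpha^{-1}\bigl(h(\alpha)-h(0)\bigr)$ rather than bare positivity. The paper's route is shorter given that Theorem \ref{karamata} is already stated, and its majorization phrasing meshes with Theorem \ref{thr:majorization_x^a_case}; yours lets a reader verify the theorem without invoking Karamata at all. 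A minor remark: your closing contradiction ($\Sz(G)-W(G)=-(N-m)<0$) is the contrapositive of the paper's observation that $\sum_i\tilde n_i=\Sz(G)+N-m>W(G)=\sum_i d_i$ forces some index with $\tilde n_i\neq d_i$; both rest on $\Sz(G)\ge W(G)$, equivalently on Proposition \ref{prop:n_maj_d} applied with $k=N$.
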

	This implies the strong conjecture holds for $G$, since there is some smallest positive value $\alpha_G\leq 1$ for which $h(\alpha_G)=0>h(0)$, and thereafter the function is strictly increasing so strictly positive. Note that Theorem \ref{derivative} is best possible, in the sense that we may have $h'(0)<0$ (and hence $h'(\epsilon)<0$ with $h(\epsilon)$ arbitrarily close to $h(0)$). For example, this is the case for stars with at least $8$ vertices.
	\begin{proof}Set $\tilde{n}_i$ to be $1$ if $n_i=0$ and $n_i$ otherwise. Since $d_i\geq 1$ for each index $i$, condition \ref{cond1} still applies with $n_i$ replaced by $\tilde{n}_i$, and ensures that $\sum_{i\leq k}(\tilde{n}_i^\alpha-d_i^\alpha)$, considered as a function of $k$ for any fixed $\alpha>0$, is increasing for $k\leq j$ and decreasing thereafter. Consequently, $(\tilde{n}_i^\alpha)$ majorizes $(d_i^\alpha)$ provided 
		\[\sum_{i=1}^{N}\tilde{n}_i^\alpha\geq \sum_{i=1}^{N}d_i^\alpha,\]
		\ie whenever $h(\alpha)+N-e(G)\geq 0$. Since $h(0)=e(G)-N$, this condition becomes $h(\alpha)\geq h(0)$.
		Now provided $\alpha>0$ we have 
		\begin{align*}
		h'(\alpha)&=\sum_{i=1}^{N}(\log n_i\cdot n_i^\alpha-\log d_i\cdot d_i^\alpha)\\
		&=\sum_{i=1}^{N}(\log \tilde{n}_i\cdot \tilde{n}_i^\alpha-\log d_i\cdot d_i^\alpha)\\
		&=\alpha^{-1}\biggl(\sum_{i=1}^{N}f(\tilde{n}_i^{\alpha})-\sum_{i=1}^{N}f(d_i^{\alpha})\biggr),
		\end{align*}
		where $f(x)=x\log x$ for $x\geq 1$. Since this is a strictly convex, increasing function, and the two sequences are distinct (the fact that $\sum_i\tilde{n}_i>\sum_id_i$ implies there is some $i$ with $\tilde{n}_i>d_i$), Theorem \ref{karamata} \eqref{kar-inc} gives $\sum_{i=1}^{N}f(\tilde{n}_i)>\sum_{i=1}^{N}f(d_i)$, as required. 
	\end{proof}
	In particular this implies the strong conjecture holds for several classes of graphs.
	\begin{corollary}\label{bip-diam-2-3}The graph $G$ satisfies Conjecture \ref{strong-conj} in any of the following cases:
	\begin{itemize}
	\item $G$ is bipartite;
	\item $G$ is edge-transitive;
	\item $\diam(G)=2$; or
	\item $\diam(G)=3$ and $m\leq N/2$.
	\end{itemize}
	\end{corollary}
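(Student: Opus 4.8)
The plan is to verify that each of the four graph classes satisfies condition \ref{cond1}, after which Theorem \ref{derivative} immediately yields Conjecture \ref{strong-conj}. The first step, common to all four cases, is a reformulation: since both $(n_i)_i$ and $(d_i)_i$ are ordered decreasingly, condition \ref{cond1} holds if and only if there is no pair of indices $i_1<i_2$ with $n_{i_1}<d_{i_1}$ and $n_{i_2}>d_{i_2}$ (a ``deficit'' may not precede an ``excess''). Indeed any such forbidden pair would have to straddle the threshold $j$, contradicting $i_1<i_2$; conversely, if no such pair exists then the strict-excess indices all precede the strict-deficit indices, and one takes $j$ to be the last excess index. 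The leverage this gives is that in any forbidden pair the monotonicity $n_{i_1}\ge n_{i_2}$ forces $d_{i_2}<n_{i_2}\le n_{i_1}<d_{i_1}$, so the two $n$-values are squeezed strictly between two \emph{distinct} distance values.

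For the bipartite and edge-transitive cases I would prove the stronger statement that every positive $n_i$ already dominates every $d_i$; then a deficit $n_{i_1}<d_{i_1}$ can occur only when $n_{i_1}=0$, whence $n_{i_2}\le n_{i_1}=0$ and no excess can follow. For bipartite $G$, across any edge $uv$ no vertex is equidistant from $u$ and $v$ (such a vertex would force an odd cycle), so $n_u(v)+n_v(u)=|G|$ and hence $n_e\ge |G|-1\ge\diam(G)\ge d_i$ for every $i$. For edge-transitive $G$ every edge shares a common value $n_e=n^{*}$, and applying Claim \ref{clm:d<n*n} to a diametral pair together with an edge on a shortest path between them gives $n^{*}\ge\diam(G)\ge d_i$ for every $i$.

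For diameter $2$ each distance satisfies $1\le d_i\le 2$. In a forbidden pair, $n_{i_2}>d_{i_2}\ge 1$ forces $n_{i_2}\ge 2$ and hence $n_{i_1}\ge 2$, while $n_{i_1}<d_{i_1}\le 2$ forces $n_{i_1}\le 1$, a contradiction. Thus no forbidden pair exists, and condition \ref{cond1} holds unconditionally.

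The diameter-$3$ case is the main obstacle, and is precisely where the hypothesis $m\le N/2$ enters. Here $1\le d_i\le 3$, and the squeezing inequality $d_{i_2}<n_{i_2}\le n_{i_1}<d_{i_1}$ leaves exactly one possibility: $n_{i_1}=n_{i_2}=2$, $d_{i_1}=3$, and $d_{i_2}=1$. I would then locate the offending index $i_2$. Since the $m$ pairs at distance $1$ are exactly the edges, there are $N-m$ pairs at distance at least $2$, so $d_{i_2}=1$ forces $i_2>N-m$; on the other hand $n_{i_2}=2$ places $i_2$ among the at most $m$ edges with $n_e\ge 2$, so $i_2\le m$. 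Together these give $N-m<m$, i.e. $m>N/2$, contradicting the hypothesis; hence no forbidden pair exists and condition \ref{cond1} holds. I expect this final step --- recognising that a failure of condition \ref{cond1} in diameter $3$ would require strictly more than $N-m$ edges with $n_e\ge 2$, which is exactly what $m\le N/2$ rules out --- to be the crux of the whole corollary.
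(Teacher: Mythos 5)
Your proposal is correct and follows essentially the same route as the paper: both verify condition \ref{cond1} case by case and invoke Theorem \ref{derivative}, using the same key facts (bipartite: $n_e\geq|G|-1$ since no vertex is equidistant across an edge; edge-transitive: every $n_e\geq\diam(G)$ via a diametral path; diameter $2$ and $3$: integer squeezing, with $m\leq N/2$ guaranteeing the first $m$ ordered distances exceed $1$). Your ``forbidden pair'' reformulation of condition \ref{cond1} is just a contrapositive repackaging of the paper's direct choice of the threshold $j$, and the hypothesis $m\leq N/2$ enters in exactly the same way in both arguments.
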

	\begin{proof}If $G$ is bipartite then for each edge $uv$, each vertex is strictly closer to either $u$ or $v$, since equality would create an odd cycle. Thus $n_u(v)\cdot n_v(u)\geq |G|-1\geq d(x,y)$, and \ref{cond1} holds with $j=m$.
	
	If $G$ is edge-transitive then every edge is on a diameter, meaning $n_i\geq \diam(G)\geq d_i$ for $1\leq i\leq m$.
	
	If $\diam(G)=2$ then $d_i\in\{1,2\}$ for each $i$. Take $j$ maximal such that $n_i\geq 2$. Then $n_i\geq d_i$ for $i\leq j$, and $n_i\leq d_i$ for all $i>j$, as required. Finally, if $\diam(G)=3$ and $m\leq N/2$ then we have $d_i\in\{2,3\}$ for each $i\leq m$, and taking $j$ maximal such that $n_i\geq 3$ gives the required property.
	\end{proof}
	
	Another corollary of Theorem~\ref{thr:majorization_x^a_case} is the case when the $n_i$ are ``on average'' at least equal to the diameter, which can be stated more precisely as follows with the use of the notion $\mu_{\alpha}$ for the power mean with exponent $\alpha$, \ie using $N=\binom{n}{2}$ we have \[\mu_{\alpha}(d_1,d_2, \ldots, d_N)= \sqrt[\alpha]{\frac{ \sum_{i=1}^N d_i ^{\alpha}}{N}}.\]
	\begin{corollary}\label{cor:mu_a_ge_diam}
		Let $\alpha$ be a value such that $\Sza(G)=\Wa(G)$ for a non-complete graph $G$. 
		If \[\mu_{\alpha}(d_1,d_2, \ldots, d_N) \ge \left(\frac{m}{N}\right)^{\frac 1{\alpha}} \diam(G),\]
		then this value $\alpha$ is unique.
	\end{corollary}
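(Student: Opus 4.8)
The plan is to show that the hypothesis forces the sequence $(n_i^\alpha)_i$ to majorize $(d_i^\alpha)_i$, after which Theorem~\ref{thr:majorization_x^a_case} delivers the uniqueness of $\alpha$ at once. First I would rewrite the hypothesis in a more convenient form. Raising both sides of the displayed inequality to the power $\alpha$ and using the definition of the power mean, the condition becomes $\frac1N\sum_{i=1}^N d_i^\alpha\ge\frac mN\diam(G)^\alpha$, that is, $\sum_{i=1}^N d_i^\alpha\ge m\,\diam(G)^\alpha$. Since $\alpha$ was chosen so that $\Sza(G)=\Wa(G)$, the left-hand side equals $\sum_{i=1}^m n_i^\alpha$, so the hypothesis is equivalent to the single inequality $\sum_{i=1}^m n_i^\alpha\ge m\,\diam(G)^\alpha$; equivalently, the average of the $m$ terms $n_i^\alpha$ is at least $\diam(G)^\alpha$.

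With this reformulation in hand, I would verify the majorization $\sum_{i=1}^k n_i^\alpha\ge\sum_{i=1}^k d_i^\alpha$ for every $k$ by splitting into two ranges. For $k\ge m$ the partial sum $\sum_{i=1}^k n_i^\alpha$ has already attained the full sum $\Sza(G)=\Wa(G)=\sum_{i=1}^N d_i^\alpha$ (the remaining $n_i$ being zero), and this clearly dominates any partial sum of the $d_i^\alpha$, so majorization is automatic there. For $k<m$ I would use that the $n_i$ are listed in decreasing order, so the average of the largest $k$ terms $n_1^\alpha,\dots,n_k^\alpha$ is at least the overall average, which is at least $\diam(G)^\alpha$ by the reformulated hypothesis; hence $\sum_{i=1}^k n_i^\alpha\ge k\,\diam(G)^\alpha$. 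On the other side, every distance satisfies $d_i\le\diam(G)$, giving $\sum_{i=1}^k d_i^\alpha\le k\,\diam(G)^\alpha$. Combining the two bounds yields $\sum_{i=1}^k n_i^\alpha\ge\sum_{i=1}^k d_i^\alpha$ in this range as well.

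Having established the majorization for all $k$, I would simply invoke Theorem~\ref{thr:majorization_x^a_case} to conclude that $\alpha$ is unique. I do not anticipate any genuine obstacle here: the whole content is the observation that the power-mean hypothesis is exactly the statement that the average of the $n_i^\alpha$ dominates $\diam(G)^\alpha$, which, together with the decreasing ordering of the $n_i$ and the trivial bound $d_i\le\diam(G)$, converts into a lower bound on each initial partial sum. The only point requiring a little care is the bookkeeping between the two ranges $k<m$ and $k\ge m$, and in particular the use of the equality of the full sums (which is where the hypothesis $\Sza(G)=\Wa(G)$ re-enters) to handle the tail.
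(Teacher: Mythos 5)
Your proposal is correct and follows essentially the same route as the paper: both reduce the claim to the majorization of $(n_i^\alpha)_i$ over $(d_i^\alpha)_i$ via Theorem~\ref{thr:majorization_x^a_case}, and both establish it by noting that the ordering of the $n_i$ reduces everything to the average bound $\sum_{i=1}^m n_i^\alpha\ge m\,\diam(G)^\alpha$, which under $\Sza(G)=\Wa(G)$ is exactly the power-mean hypothesis. Your write-up is only slightly more explicit than the paper's (treating the range $k\ge m$ and the bound $d_i\le\diam(G)$ separately), but the content is identical.
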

\begin{proof}
	By Theorem~\ref{thr:majorization_x^a_case} it is sufficient to prove that $(n_i^{\alpha})_i$ majorizes $(d_i^{\alpha})_i$, 
	which is the case if for each $k \le m$ the first $k$ terms of the first sequence are on average at least equal to the largest term of the
	second, namely, $\diam(G)^{\alpha}$.
	As the sequence is ordered, it is sufficient to prove this for $k=m$, \ie
	\begin{align*}
	\sum_{i=1}^m n_i^{\alpha} \ge m \diam(G)^{\alpha} &\Longleftrightarrow \sum_{i=1}^N d_i^{\alpha} \ge m \diam(G)^{\alpha}\\
	&\Longleftrightarrow \sqrt[\alpha]{\frac{ \sum_{i=1}^N d_i ^{\alpha}}{N}}\ge \sqrt[\alpha]{\frac{m}{N}} \diam(G).\qedhere
	\end{align*}
\end{proof}	
Corollary \ref{cor:mu_a_ge_diam} is sufficient to prove Conjecture~\ref{strong-conj} for all graphs lying in some sparse regime.
Intuitively if $N\gg m$, the $n_i$ are on average much larger than the $d_i$ and so the majorization property will hold.

\begin{theorem}\label{thr:sparsecase}
	Conjecture~\ref{strong-conj} is true for graphs with $m$ edges and $n$ vertices whenever $m \le \frac14 (n^{4/3}-n^{1/3})$.
\end{theorem}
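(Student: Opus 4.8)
The plan is to verify the hypothesis of Corollary~\ref{cor:mu_a_ge_diam}. Write $D=\diam(G)$. As noted just before Theorem~\ref{thr:majorization_x^a_case}, the function $h$ has a root in $(0,1]$, and the strong conjecture is equivalent to this root being unique. Since the hypothesis $\mu_\alpha(d_1,\dots,d_N)\ge(m/N)^{1/\alpha}D$ is precisely $\sum_{i=1}^N d_i^\alpha\ge mD^\alpha$, it suffices to prove this inequality for every $\alpha\in(0,1]$: Corollary~\ref{cor:mu_a_ge_diam}, applied at the root, then yields uniqueness. So I would fix $\alpha\in(0,1]$ and aim to show $S:=\sum_{i=1}^N d_i^\alpha\ge mD^\alpha$, using the sparsity bound on $m$.

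For this I would use two lower bounds on $S$. The first is trivial: since every $d_i\ge 1$ we have $S\ge N$. The second exploits a diametral path $v_0v_1\cdots v_D$: for $i<j$ the pair $(v_i,v_j)$ lies at distance exactly $j-i$, so these $\binom{D+1}{2}$ pairs alone contribute
\[S\ge\sum_{k=1}^D (D+1-k)\,k^\alpha.\]
The key simplification is that, for $1\le k\le D$ and $\alpha\le 1$, we have $k^\alpha\ge k\,D^{\alpha-1}$ (because $k^{\alpha-1}\ge D^{\alpha-1}$); substituting this and using the closed form $\sum_{k=1}^D(D+1-k)k=\binom{D+2}{3}$ gives
\[S\ge D^{\alpha-1}\cdot\frac{D(D+1)(D+2)}{6}=D^\alpha\cdot\frac{(D+1)(D+2)}{6}.\]

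I would then split into two cases by the size of the diameter. If $(D+1)(D+2)\ge 6m$, the second bound immediately gives $S\ge mD^\alpha$. Otherwise $(D+1)(D+2)<6m$, whence $D<\sqrt{6m}$ and, since $\alpha\le 1$ and $D\ge 1$, also $D^\alpha<\sqrt{6m}$; the first bound then reduces the target $N\ge mD^\alpha$ to $N\ge\sqrt6\,m^{3/2}$, that is, to $m\le(N/\sqrt6)^{2/3}$. A short computation with $N=\binom n2$ shows that the hypothesis $m\le\frac14(n^{4/3}-n^{1/3})$ implies this bound, completing the argument. The main thing to get right is the uniform-in-$\alpha$ lower bound on the diametral-path sum: the elementary inequality $k^\alpha\ge kD^{\alpha-1}$ is what makes this painless and yields an exact expression rather than an integral estimate. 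I expect the stated threshold $\frac14(n^{4/3}-n^{1/3})$ to carry some slack against $(N/\sqrt6)^{2/3}$, which is consistent with the round constant appearing in the statement.
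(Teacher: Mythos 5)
Your proof is correct, and while it shares the paper's overall reduction---both arguments work by establishing the hypothesis of Corollary~\ref{cor:mu_a_ge_diam} at a root of $h$ in $(0,1]$---the verification itself takes a genuinely different route. The paper argues by contradiction on the Szeged side: assuming $\mu_\alpha \le (m/N)^{1/\alpha}\diam(G)$ at a root, it shows that every edge $e$ in the middle half of a diametral path satisfies $n_e \ge \frac{3}{16}\diam(G)^2$, and deduces $\Sza(G) \ge \frac{3}{32}\diam(G)^{3\alpha} > \Wa(G)$, contradicting $h(\alpha)=0$. You instead work entirely on the distance side and never invoke the root equation: writing $D=\diam(G)$, the pairs lying on a diametral path give $\sum_i d_i^\alpha \ge \sum_{k=1}^D (D+1-k)k^\alpha \ge D^{\alpha}(D+1)(D+2)/6$ (your inequality $k^\alpha \ge k D^{\alpha-1}$ for $k\le D$, $\alpha\le 1$, and the identity $\sum_{k=1}^D(D+1-k)k=\binom{D+2}{3}$ are both correct), which settles the case $(D+1)(D+2)\ge 6m$; otherwise $D^\alpha \le D < \sqrt{6m}$ and the trivial bound $\sum_i d_i^\alpha \ge N$ suffices once $N \ge \sqrt{6}\,m^{3/2}$. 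Your deferred computation does check out: $m \le \frac14(n^{4/3}-n^{1/3}) = \frac14 n^{1/3}(n-1)$ gives $m^3 \le \frac{n(n-1)^3}{64} \le \frac{n^2(n-1)^2}{24} = \frac{N^2}{6}$ (the middle step being $24(n-1)\le 64n$), \ie $m \le (N/\sqrt{6})^{2/3}$, as required. As for what each approach buys: yours is more elementary (it uses no information about the $n_e$ beyond what is already packaged inside Corollary~\ref{cor:mu_a_ge_diam}, and no contradiction structure), it establishes the corollary's hypothesis unconditionally for every $\alpha\in(0,1]$ rather than only at roots, and it in fact covers the slightly wider range $m \le (N/\sqrt{6})^{2/3}$, which is roughly $0.35\,n^{4/3}$ versus the stated $0.25\,n^{4/3}$; the paper's argument instead isolates the structural fact that central edges of a diametral path have quadratically large $n_e$, bounding the Szeged sum from below, and arrives at essentially the same threshold.
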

\begin{proof}
	By Corollary~\ref{cor:mu_a_ge_diam} we only need to prove that the inequality 
	\[\mu_{\alpha}:=\mu_{\alpha}(d_1,d_2, \ldots, d_N) \le \left(\frac{m}{N}\right)^{\frac 1{\alpha}} \diam(G)\]
	is impossible.
	So assume this is the case, \ie we have $\diam(G) \ge \left(\frac{N}{m}\right)^{\frac 1{\alpha}} \mu_{\alpha} \ge (2 n^{2/3})^{\frac 1{\alpha}} \mu_{\alpha}$.
	Note that for all edges in the middle of a diameter, the $n_i$ are large.
	\begin{claim}
		Let $e_1e_2 \ldots e_{\diam(G)}$ be the path between two vertices $x$ and $y$ for which $d(x,y)=\diam(G)$.
		Then $n_{e_j} \ge \frac{3}{16} \diam(G)^2$ when $\frac{\diam(G)}{4} \le j \le \frac{3\diam(G)}{4}+1.$
	\end{claim}
\begin{claimproof}
	Let $e_j=uv$ with $u=e_j \cap e_{j-1}$ and $v=e_j \cap e_{j+1}$.
	Then $N_u(e_j)$ contains at least the $j$ vertices on the path from $x$ to $u$, and similarly $n_v(e_j)\ge \diam(G)+1-j.$
	Hence $n_u(e_j)n_v(e_j)\ge j(\diam(G)+1-j)\ge \frac{3}{16}\diam(G)^2$.
\end{claimproof}
Taking into account that $\alpha \le 1$ and $\mu_{\alpha} \ge 1$, this implies  
\begin{align*}
\sum_i n_i^{\alpha}	&\ge \frac{\diam(G)}{2} \left(\frac{3}{16}\diam(G)^2\right)^{\alpha}\\
&\ge \frac{3}{32} \diam(G)^{3 \alpha}\\
&\ge  \frac3{4} n^{2} \mu_{\alpha}^{\alpha}\\
&> W^{\alpha}(G).
\end{align*}
This is the desired contradiction, so in the sparse regime Corollary~\ref{cor:mu_a_ge_diam} does apply.
\end{proof}
As a consequence, Conjecture~\ref{strong-conj} is a.a.s.\ true for any random graph of the form $G_{n,p}$ or $G_{n,m}$ (conditional on the graph being connected).
\begin{corollary}
	For any value of $m$ or $p$, Conjecture~\ref{strong-conj} is a.a.s.\ true for any random graph of the form $G_{n,p}$ or $G_{n,m}$ (conditional on the graph being connected).
\end{corollary}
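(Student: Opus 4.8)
The plan is to combine the three sufficient conditions for the strong conjecture that have already been established — Theorem~\ref{thr:sparsecase}, the $\diam=2$ case of Corollary~\ref{bip-diam-2-3}, and Corollary~\ref{cor:mu_a_ge_diam} — and to show that, whatever the density, a typical connected graph falls into at least one of the corresponding regimes. I would work primarily with $G_{n,m}$, where the edge count is deterministic, and transfer to $G_{n,p}$ afterwards. The first point is that Corollary~\ref{cor:mu_a_ge_diam} has a clean consequence here. Since the weak conjecture is now proved, no crossing point $\alpha$ with $\Sza(G)=\Wa(G)$ can exceed $1$; and for such an $\alpha\le 1$ we have $\mu_{\alpha}(d_1,\dots,d_N)\ge 1$ while $(m/N)^{1/\alpha}\le m/N$ because $0<m/N\le 1$. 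Hence the hypothesis of Corollary~\ref{cor:mu_a_ge_diam} holds automatically at the crossing point whenever $m\cdot\diam(G)\le N$. So it suffices to verify, a.a.s., at least one of the conditions: (i) $m\le\frac14(n^{4/3}-n^{1/3})$; (ii) $\diam(G)=2$; or (iii) $m\cdot\diam(G)\le N$.

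For $G_{n,m}$ I would split on the value of $m$. When $m\le\frac14(n^{4/3}-n^{1/3})$, condition (i) holds outright, with no appeal to randomness and no dependence on the conditioning on connectivity, since it is a statement about every connected graph with that many edges. As $\tfrac14 n^{4/3}$ lies far above the connectivity threshold $\tfrac12 n\log n$, in the complementary range $m>\frac14 n^{4/3}$ the graph is a.a.s.\ connected and the conditioning is vacuous. Writing $p=m/\binom{n}{2}$, we have $np=2m/n\to\infty$ polynomially throughout this range, so I would invoke the standard concentration of the diameter of a random graph, namely that $\diam$ is a.a.s.\ $(1+o(1))\log n/\log(np)$ and in particular bounded by a constant. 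For $m$ up to the diameter-$2$ threshold $\binom{n}{2}\sqrt{2\log n/n}$ this yields $m\cdot\diam(G)=O(n^2 p)=o(n^2)=o(N)$, so condition (iii) applies; for larger $m$ (equivalently $p\ge\sqrt{2\log n/n}$) the diameter is a.a.s.\ exactly $2$ unless the graph is complete, so condition (ii) applies (complete graphs are excluded and in any case satisfy the conjecture trivially). This disposes of every value of $m$, and one must keep the failure probabilities uniform in $m$.

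Finally, to pass from $G_{n,m}$ to $G_{n,p}$ I would use that $G_{n,p}$ conditioned on the size of its edge set is exactly $G_{n,m}$, so $G_{n,p}$ conditioned on connectivity is a mixture of the laws of $G_{n,m}$ conditioned on connectivity. Because the strong conjecture holds a.a.s.\ for $G_{n,m}$ for \emph{every} $m$ — with probability exactly $1$ in the deterministic range (i) — the mixture inherits the conclusion a.a.s., independently of how the conditional edge count is distributed; this is precisely why working through $G_{n,m}$ sidesteps any difficulty with conditioning a sub-threshold $G_{n,p}$ on connectivity. I expect the main obstacle to be the uniform diameter control in the moderate band $\tfrac14 n^{4/3}<m<\binom{n}{2}\sqrt{2\log n/n}$: one must quote a diameter concentration result for random graphs valid, with a uniform error bound, across this polynomially wide range of densities, and confirm that the resulting $O(1)$ bound on $\diam(G)$ keeps $m\cdot\diam(G)$ below $N$. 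The behaviour near $p=1$, where the graph may be complete, and the mixture transfer itself, should be routine once this uniform estimate is in hand.
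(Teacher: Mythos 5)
Your proposal is correct in substance, and in the dense regime it takes a genuinely different --- and in fact more robust --- route than the paper. The paper handles all of $m>\frac14(n^{4/3}-n^{1/3})$ by citing Klee and Larman \cite{KL81} for ``diameter a.a.s.\ at most $3$, and equal to $2$ when $m\ge N/3$'', and then invoking the $\diam=2$ and $\diam=3$, $m\le N/2$ cases of Corollary~\ref{bip-diam-2-3}; it never uses Corollary~\ref{cor:mu_a_ge_diam} in this proof. You instead extract from Corollary~\ref{cor:mu_a_ge_diam} the clean sufficient condition $m\cdot\diam(G)\le N$, using the now-proved Conjecture~\ref{weak-conj} (together with the positivity of crossing exponents, as in the paper's use of \cite[Proposition 8]{HKS19}) to confine any crossing point to $(0,1]$, whence $\mu_\alpha\ge 1$ and $(m/N)^{1/\alpha}\le m/N$. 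This buys something real: in the intermediate band you only need $\diam(G)=O(1)$ a.a.s., not $\diam(G)\le 3$. That matters, because just above the sparse threshold (roughly $\tfrac12 n^{-2/3}<p<(2\log n)^{1/3}n^{-2/3}$) the diameter of $G(n,p)$ is in fact a.a.s.\ $4$, not at most $3$ --- the diameter-$3$ threshold carries a $(\log n)^{1/3}$ factor --- so the paper's one-line appeal to \cite{KL81} is strained precisely there, whereas your condition (iii) absorbs any constant diameter bound because $m=o(N)$ throughout that band. Two small repairs to your write-up. First, at the exact diameter-$2$ threshold $p=\sqrt{2\log n/n}$ the diameter is not a.a.s.\ $2$ (the expected number of pairs at distance $3$ is of constant order there); this is harmless, since in that window $\diam\in\{2,3\}$ a.a.s.\ and $m\le N/2$, so Corollary~\ref{bip-diam-2-3} covers both outcomes --- or simply push the middle band up to $p\le\sqrt{3\log n/n}$, where condition (iii) still holds. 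Second, the uniformity in $m$ that you flag as the main obstacle is automatic: the assertion ``for every sequence $m(n)$ the conditional failure probability tends to $0$'' is equivalent to the supremum over $m$ of that failure probability tending to $0$ (otherwise a subsequence of bad values of $m$ would witness the failure), so the mixture transfer to $G_{n,p}$ goes through exactly as you describe; this conditioning step is also how the paper concludes.
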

\begin{proof}
	By Theorem~\ref{thr:sparsecase} the result is true in the sparse case, so we only have to consider the dense case where 
	$m> \frac14 (n^{4/3}-n^{1/3})$.
	By a result of Klee and Larman~\cite{KL81}, the diameter is a.a.s.\ at most $3$, and when $m \ge  \frac{N}{3}$ it is a.a.s.\ equal to $2$. Therefore Corollary \ref{bip-diam-2-3} applies in this regime. Finally, the result for $G(n,p)$ follows by conditioning on the number of edges.
\end{proof}

\section{Counterexamples to the strong conjecture}\label{sec:counter}

Let $G_{k,\ell}$ be a graph constructed as follows.
Start with a clique $K_k$ and remove a $k$-cycle $v_1v_2v_3 \ldots v_k$ (a Hamiltonian path) from the clique.
Next connect all its vertices with one endvertex $u_1$ of a path $u_1u_2\ldots u_{\ell+1}$ of length $\ell$ (\ie of order $\ell+1$). See Figure \ref{fig:graph} for a small example.

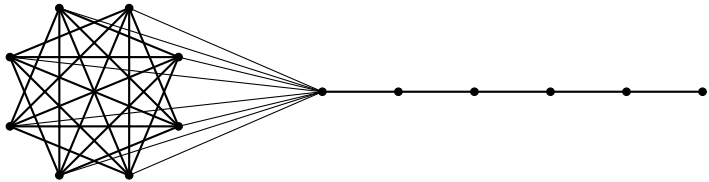
\begin{figure}[ht]
\centering
\begin{tikzpicture}
\foreach \x in {0,45,...,315}{\draw[fill] (\x+22.5:1.2) circle (0.05);
\draw(\x+22.5:1.2) -- (0:3);}
\foreach \x in {0,45,90,135}{\draw[thick] (\x+22.5:1.2) -- (\x+202.5:1.2);}
\draw[thick] (22.5:1.2) -- (112.5:1.2) -- (202.5:1.2) -- (292.5:1.2) -- cycle;
\draw[thick] (67.5:1.2) -- (157.5:1.2) -- (247.5:1.2) -- (337.5:1.2) -- cycle;
\draw[thick] (22.5:1.2) -- (157.5:1.2) -- (292.5:1.2) -- (67.5:1.2) -- (202.5:1.2) -- (337.5:1.2) -- (112.5:1.2) -- (247.5:1.2) -- cycle;
\foreach \x in {3,...,8}{\draw[fill] (0:\x) circle (0.05);}
\draw[thick] (0:3) -- (0:8);
\end{tikzpicture}
\caption{The graph $G_{k,\ell}$ for $k=8$ and $\ell=5$.}\label{fig:graph}
\end{figure}

Now we compute $\Wa(G_{k,\ell})$ and $\Sza(G_{k,\ell})$ (assuming $k\ge 6$).

\subsection{Computation of \texorpdfstring{$\Wa(G_{k,\ell})$}{variable Wiener} and \texorpdfstring{$\Sza(G_{k,\ell})$}{variable Szeged indices}}

The number of edges of the graph equals $\binom{k}{2}+\ell$.
There are $k$ pairs of vertices which are at distance $2$ in the clique.
For each $i\leq \ell+1$, there are $k$ pairs with one vertex from the clique together with $u_i$ which are at distance $i$ from each other.
There are also $\ell+1-i$ pairs of vertices, both belonging to the path, which are at distance $i$ from each other; these are $(u_j,u_{j+i})$ for $1 \le j \le \ell+1-i.$

This implies that 
\[\Wa(G_{k,\ell})=\binom{k}{2}+\ell + (2k+\ell-1)2^\alpha + \sum_{i=3}^{\ell+1} (k+\ell+1-i)i^{\alpha}.\]

Next, we compute $\Sza(G_{k,\ell}).$

For every pair of vertices in the clique of the form $(v_i,v_{i+2})$ (indices modulo $k$), we have 
$N_{v_i}(v_{i+2})=\{v_i,v_{i+3}\}$ and $N_{v_{i+2}}(v_i)=\{v_{i-1},v_{i+2}\}$.
So the corresponding value for $n_{v_iv_{i+2}}=4$;
there are $k$ such edges.

For every edge $v_iv_j$ with $\lvert i-j \rvert \not\equiv 2 \pmod k$ we have $N_{v_i}(v_{j})=\{v_i,v_{j-1},v_{j+1}\}$ and $N_{v_j}(v_i)=\{v_j,v_{i-1},v_{i+1}\}, $ so $n_{v_iv_{j}}=9$.
There are $\binom{k}{2}-2k$ edges of this form.

For every edge $u_1v_i$, we have $N_{v_i}(u_1))=\{v_i\}$ and $N_{u_1}(v_i)$ contains all vertices from the path $P_{\ell}$, as well as $v_{i-1}$ and  $v_{i+1}$.
So we have $k$ edges here for which $n_{v_iu_{1}}=\ell+3.$

For every edge of the form $u_iu_{i+1}$ we have $n_{u_iu_{i+1}}=(k+i)(\ell+1-i).$

Taking everything into account, we get 
\[\Sza(G_{k,\ell})=k4^{\alpha}+\left(\binom{k}{2}-2k\right)9^{\alpha} +k(\ell+3)^{\alpha}+
 \sum_{i=1}^{\ell} ((k+i)(\ell+1-i))^{\alpha}.\]
 
\subsection{Some explicit counterexamples}

There are multiple graphs of the form $G_{k,\ell}$ which are counterexamples. All we need is that $k$ is reasonably large, and then there are some corresponding $\ell$ giving three values of $\alpha$ for which $\Sza(G_{k,\ell})-\Wa(G_{k,\ell})=0$.
One example is $k=520$ and $\ell=82$; part of the function $h(\alpha)$ showing that the conjecture does not hold is plotted in Figure \ref{fig:function}.
\begin{figure}[ht]
\centering
\includegraphics{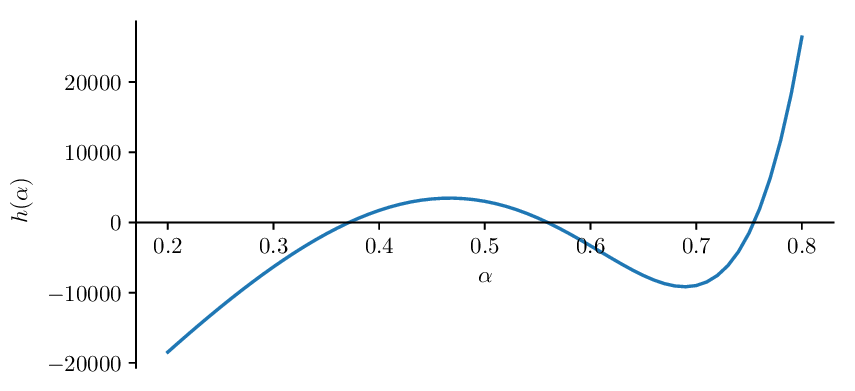}
\caption{The function $h(\alpha)$ plotted for $G_{520,82}$.}\label{fig:function}
\end{figure}

We can actually prove that there are infinitely many counterexamples.
Since our construction is not a block graph, we know that $h(0)<0$ and $h(1)>0$.
For $k$ tending to infinity and $0<\epsilon<0.25$ fixed, $0 \le \alpha \le 1$ and $k^{0.5+\epsilon}< \ell < k^{1-\epsilon}$,
we note that 
\[\Wa(G_{k,\ell})=\binom{k}{2}\left(1+O\left(\frac 1k\right)\right)+k\left(1+O\left(\frac 1{k^\epsilon}\right)\right)\sum_{i=3}^{\ell+1} i^{\alpha}\sim \binom{k}{2} + k \frac{\ell^{\alpha +1}}{\alpha +1}\]
and 
\[\Sza(G_{k,\ell})=\binom{k}{2}\left(1+O\left(\frac 1{k^\epsilon}\right)\right)9^{\alpha}+k^{\alpha}\left(1+O\left(\frac 1{k^\epsilon}\right)\right)\sum_{i=1}^{\ell} i^{\alpha}\sim \binom{k}{2}9^{\alpha} + k^{\alpha} \frac{\ell^{\alpha +1}}{\alpha +1}.\]

This implies that $h(\epsilon)\sim \binom{k}{2}(9^\epsilon-1)>0$ and $h(1-\epsilon)\sim -k \frac{\ell^{2-\epsilon}}{2-\epsilon}<0$.
By applying the intermediate value theorem on $h(0)<0, h(\epsilon)>0, h(1-\epsilon)<0$ and $h(1)>0$ we conclude that there are at least $3$ values $\alpha$ for which $h(\alpha)=0$.

It is not hard to see that the same analysis gives the same conclusion when a matching instead of a cycle (for $k$ even) was removed from the $K_k$, or any $r$-regular graph for fixed $r$ (and $k$ sufficiently large).

\section{Conclusion}

We have resolved both conjectures of \cite{HKS19}. We proved the weaker Conjecture~\ref{weak-conj} by showing majorization of the sequence $(n_i)_i$ with respect to $(d_i)_i$.
The stronger Conjecture~\ref{strong-conj} is not true in general, as there are (infinitely many) sporadic counterexamples such as $G_{520,82}$, as explained in Section~\ref{sec:counter}.

Nevertheless, it is true in many important cases as proven in this note. The conjecture is true for example for the following families:
\begin{itemize}
	\item block graphs;
	\item graphs with diameter $2$;
	\item any $n$-vertex graph with at most $\frac1{2n^{2/3}}\binom{n}2$ edges;
	\item bipartite graphs;
	\item edge-transitive graphs; and
	\item the random graphs $G(n,p)$ and $G(n,m)$ a.a.s.\ (for the entire range of $p$ or $m$).
\end{itemize}

	\bibliographystyle{abbrv}
	\bibliography{Var_W_Sz}
	
\end{document}